\tikzset{->-/.style={decoration={markings,mark=at position #1 with {\arrow{>}}},postaction={decorate}}}
\numberwithin{equation}{section}
\newcommand{\G}{\mathcal{G}}
\newcommand{\DD}{\mathcal{D}}
\newcommand{\D}{\Delta}
\newcommand{\N}{\mathbb{N}}
\DeclareMathOperator{\reg}{reg}
\DeclareMathOperator{\pdim}{pdim}
\newtheorem{thm}{Theorem}[section]
\newtheorem{question}[thm]{Question}
\newtheorem{corollary}[thm]{Corollary}
\newtheorem{theorem}[thm]{Theorem}
\newtheorem{lemma}[thm]{Lemma}
\theoremstyle{definition}
\newtheorem{definition}[thm]{Definition}
\newtheorem{example}[thm]{Example}
\newtheorem{notation}[thm]{Notation}
\newtheorem{remark}[thm]{Remark}
\begin{document}

\title{Betti Numbers of  Weighted Oriented Graphs}

\author{Beata Casiday}
\address{Department of Mathematics, Yale University, New Haven, CT 06520-8283}
\email{bea.r.casiday@gmail.com}
\urladdr{}

\author{Selvi Kara}
\address{Department of Mathematics and Statistics, University of South Alabama,  411 University Blvd North, Mobile, AL 36688-0002, USA}
\email{selvi@southalabama.edu}
\urladdr{}

\begin{abstract} 
Let $\DD$ be a weighted oriented graph and $I(\DD)$ be its edge ideal. In this paper, we investigate the Betti numbers of $I(\DD)$ via upper-Koszul simplicial complexes, Betti splittings and the mapping cone construction. In particular,  we provide recursive formulas for the Betti numbers of edge ideals of several classes of weighted oriented graphs.  We also identify classes of weighted oriented graphs whose edge ideals  have a unique extremal Betti number which allows us to compute the regularity and projective dimension for the identified classes. Furthermore, we characterize the structure of a weighted oriented graph $\DD$ on $n$ vertices such that $\pdim (R/I(\DD))=n$ where $R=k[x_1,\ldots, x_n]$.
 
\end{abstract}

\subjclass[2010]{}
\keywords{}

\maketitle


\section{Introduction}

An \emph{oriented graph} is an ordered pair $\DD= (V(\DD), E(\DD))$   with the vertex set $V(\DD)$, the edge set $E(\DD)$ and an underlying  graph $G$ on which each edge is given an orientation. If $e=\{x,y\}$ is an edge in $G$ and $e$ is oriented from $x$ to $y$ in $\DD$, we denote the oriented edge  by $(x,y)$ to reflect the orientation. In contrast to directed graphs, multiple edges or loops are not allowed in oriented graphs. An oriented graph $\DD$ is called \emph{vertex-weighted oriented} (or simply \emph{weighted}) if each vertex is assigned a weight by a function $w : V(\DD) \rightarrow \mathbb{N}^{+}$ called a weight function. For simplicity, we set $w_i = w(x_i)$ for each $x_i\in V(\DD).$ If  the weight value $w_i$ of vertex $x_i$ is one, we say $x_i$ has a trivial weight in $\DD$ and we call $x_i$ a \emph{trivial vertex}. Otherwise, we say $x_i$ has a non-trivial weight in $\DD$ and call $x_i$ a \emph{non-trivial vertex}.

Let $\DD$ be a weighted oriented graph with the vertex set $V(\DD)=\{x_1,\ldots, x_n\}$ and $R=k[x_1,\ldots, x_n]$ be a polynomial ring  over a field $k$. By identifying vertices of $\DD$ with variables in $R$, the  \emph{edge ideal} of a weighted oriented graph $\DD$ is defined as
$$I(\DD)= (x_ix_j^{w_j} ~:~ (x_i,x_j) \in E(\DD)).$$
If all vertices of $\DD$ have trivial weights, then $I(\DD)$ is the edge ideal of the (undirected, unweighted) underlying graph of $\DD$. Edge ideal of undirected, unweighted graphs are studied extensively in the literature (see \cite{banerjee2017regularity}). The minimal generators of $I(\DD)$ display only the weights of target vertices for each edge. If a vertex has only outgoing edges from it, we call it a \emph{source vertex}. Since weights of source vertices do not appear in the minimal generators of $I(\DD)$, we shall assume that $w_i=1$ if $x_i$ is a source vertex. 

One of the known appearances of edge ideals of weighted oriented graphs is in the algebraic coding theory literature (see \cite{carvalho2017projective,martinez2017minimum}). In particular, the ideal $I=(x_ix_j^{w_j} : 1\leq i < j\leq n)$ where $2\leq w_1\leq \cdots \leq w_n$  is the initial ideal of    the vanishing ideal $I(\mathcal{X})$ of a projective nested cartesian set  $\mathcal{X}$ with respect to the lexicographic order (see \cite[Proposition 2.11]{carvalho2017projective}). Projective nested cartesian codes are images of  degree $d$ evaluation maps on $\mathcal{X}$ and these type of evaluation codes are introduced to generalize the classical projective Reed–Muller type codes. Note that the ideal $I$ is the edge ideal of a weighted oriented complete graph on $n$ vertices with the edge set $\{ (x_i,x_j) : 1\leq i<j \leq n\}$ such that $x_1$ is a source vertex and all other vertices are non-trivial.  Algebraic invariants of $I$ is used in detecting ``good" projective nested cartesian codes, for instance, the Castelnuovo-Mumford regularity of $R/I$, denoted by $ \reg (R/I)$,  is a strict upper bound for an ``optimal" degree $d.$

Edge ideals of weighted oriented graphs  are fairly new objects in the combinatorial commutative algebra community and there have been a few papers investigating algebraic properties and invariants of these objects (see \cite{BBKO, gimenez2018symbolic, ha2019edge, pitones2019monomial,zhu2019projective}). There has been an extensive literature on the edge ideals of (unweighted, unoriented) graphs, and one of the reasons for such fruitful outcomes is due to the squarefree nature of edge ideals of graphs. On the other hand, edge ideals of weighted oriented graphs are not squarefree in general, so many of the established squarefree connections such as Hochster's formula and independence complexes are no longer available in studying our objects of interest. Furthermore, descriptions of edge ideals of weighted oriented graphs differ depending on the orientation and positions of non-trivial weights, making it more complicated to obtain general results for this class of ideals.  

Our general goal is to address these issues in the study of Betti numbers of edge ideals of weighted oriented graphs. One of the essential elements of the paper is the upper-Koszul simplicial complex. The following Hochster-like formula given in \cite[Theorem 2.2]{bayer1999extremal}  exploits the structure of upper Koszul-simplicial complexes and allows one to compute Betti numbers of $I(\DD)$ in terms of the holes of upper-Koszul simplicial complexes.
$$\beta_{i,\textbf{b}} (I) = \dim_k \widetilde{H}_{i-1} (K_{\textbf{b}} (I);k)$$
where $K_{\textbf{b}} (I)$ is the upper-Koszul simplicial complex of a monomial ideal $I$ at multidegree $\textbf{b} \in \mathbb{N}^n$.

Current literature on algebraic invariants of edge ideals of weighted oriented graphs is obtained by focusing on specific classes of graphs with a predetermined orientation or weight assumptions. For instance, in \cite{zhu2019projective}, authors investigate the regularity and the projective dimension of edge ideals of weighted cycles and rooted forests with the assumption that all non-source vertices are non-trivial and graphs are naturally oriented (i.e., all edges are oriented in the same direction). A more general case, class of weighted naturally oriented paths and cycles under any weight distribution is studied in \cite{BBKO}.  All the formulas provided in \cite{BBKO, zhu2019projective} heavily depend on the orientation and weight distributions. In order to obtain more general results, we investigate the Betti numbers and provide recursive formulas for these invariants. With this approach, we successfully recover several results from \cite{zhu2019projective} and suggest an explanation for the nature of regularity formulas given in \cite{BBKO}.

If $\DD$ is a weighted oriented graph on $n$ vertices, then the projective dimension of $ R/I(\DD) $ is at most $n$  by  Hilbert's Syzygy Theorem. One of the main results of the paper classifies all weighted oriented graphs such that the upper bound for the projective dimension is tight (\Cref{thm:completeMaxPdim}). We achieve it by concluding that edge ideals of such weighted oriented graphs have a unique extremal Betti number and it occurs at the multidegree $\textbf{b} =(w_1, \ldots, w_n) \in \mathbb{N}^n.$  In addition, we show that  $\beta_{n-1,\textbf{b}}$ is the unique extremal Betti number for edge ideals of classes of weighted oriented complete graphs and weighted rooted graphs  on $n$ vertices where  $\textbf{b} =(w_1, \ldots, w_n) \in \mathbb{N}^n$ (\Cref{thm:rootedGraph} and \Cref{thm:naturalWOC}). As a result, we provide formulas for the regularity and projective dimension of the edge ideals of those classes of weighted oriented graphs. Other results of the paper focuses on providing recursive formulas for the Betti numbers of edge ideals of weighted oriented graphs. In particular,  we provide such formulas for edge ideals of
\begin{itemize}
    \item weighted oriented complete graphs with at least one sink vertex by making use of Betti splittings (\Cref{thm:completeSink}) and
    \item weighted oriented graphs with at least one sink vertex that is also a leaf by employing the mapping cone construction (\Cref{thm:mappingCone}).
\end{itemize}

Our paper is organized as follows. In \Cref{ch1}, we recall the necessary terminology and results which will be used in the paper. \Cref{ch2}, we provide general results by relating algebraic invariants of edge ideals of weighted oriented graphs and their induced weighted oriented subgraphs. We also show that reducing weight of a non-trivial sink vertex (i.e., a vertex with only incoming edges towards it) reduces the regularity by one and keeps projective dimension unchanged (\Cref{cor:sinkReduction}). In \Cref{ch3}, we prove \Cref{thm:completeMaxPdim} and \Cref{thm:rootedGraph}. \Cref{ch4} is devoted to weighted oriented complete graphs on $n$ vertices. In particular, we provide formulas for the regularity and projective dimension when the edges are oriented in the ``natural" way, i.e.,  $(x_i,x_j)$ for any $1\leq i < j \leq n$. We also prove \Cref{thm:completeSink} in this section. In \Cref{ch5}, we use the mapping cone construction and obtain recursive formulas for Betti numbers of weighted rooted graphs in \Cref{thm:mappingCone}. As an application of  \Cref{thm:mappingCone}, we consider weighted oriented paths and make connections with the regularity formula given in \cite{BBKO}. Finally, in \Cref{ch6}, we raise some questions about the behavior of Betti numbers of edge ideals under weight reduction operations.

\section{Preliminaries}\label{ch1}

In this section, we collect the notation and terminology that will be used throughout the paper. 

Let $R=k[x_1,\ldots, x_n]$ be a polynomial ring over a field $k$ and $ M$ be a finitely generated $R$ module. Then the minimal free resolution of $M$ over $R$ is of the form
$$0 \longrightarrow \bigoplus_{j \in \mathbb{Z}} R(-j)^{\beta_{p,j} (M)} \longrightarrow \bigoplus_{j \in \mathbb{Z}} R(-j)^{\beta_{p-1,j} (M)} \longrightarrow  \cdots \longrightarrow  \bigoplus_{j \in \mathbb{Z}} R(-j)^{\beta_{0,j} (M)} \longrightarrow  M \longrightarrow 0 .$$

The exponents $\beta_{i,j} (M)$ are invariants of the module called the Betti numbers of $M$ and these invariants encode all the information about the minimal free resolution of a module. In general, it is difficult to explicitly compute the Betti numbers. A common approach to go around this issue is to investigate coarser invariants of the module associated to Betti numbers. In this paper, we focus on the Castelnuovo-Mumford regularity (or simply, regularity), projective dimension and the extremal Betti numbers of $M=R/I$ where $I$ is a homogeneous ideal of $R$. The \emph{Castelnuovo-Mumford regularity} and the \emph{projective dimension} of $R/I$ are defined as
$$  \reg (R/I) = \max \{ j-i : \beta_{i,j} (R/I) \neq 0\}  $$
and
  $$\pdim (R/I) = \max \{ i : \beta_{i,j} (R/I) \neq 0\}.$$

A Betti number $\beta_{k,l} (R/I) \neq 0$ is called \emph{extremal} if $\beta_{i,j} (R/I) =0$ for all pairs $(i,j) \neq (k,l)$ with $i\geq k$ and $j \geq l.$ In other words, extremal Betti numbers occupy the upper left corner of a block of zeroes in the Betti diagram of $R/I$ in Macaulay 2. The notion of extremal Betti numbers are introduced in \cite{bayer1999extremal} as a refinement of the notion of the regularity and one can read off the regularity and projective dimension from the extremal Betti numbers. Particularly, $R/I$ has the unique extremal Betti number if and only if $\beta_{p,p+r} (R/I) \neq 0$ where $p = \pdim (R/I)$ and $r = \reg (R/I).$

\subsection{Upper-Koszul Simplicial Complexes}

An important connection in the field of combinatorial commutative algebra is the Stanley-Reisner correspondence which allows one to relate a squarefree monomial ideal with a simplicial complex  (see \cite{francisco2014survey}). As part of the Stanley-Reisner theory, one can compute the Betti numbers of a squarefree monomial ideal through dimensions of holes of a simplicial complex through the Hochster's formula. When the ideal $I$ is no longer squarefree, there is no Stanley-Reisner complex associated to $I$ and Hochster’s formula cannot be applied directly. However, Bayer, Charalambous, and Popescu introduced the upper Koszul simplicial complex in \cite{bayer1999extremal} and provided a Hochster-like formula to compute the multigraded Betti numbers of any monomial ideal.

\begin{definition}
Let $I$ be a monomial ideal in $R=k[x_1,\ldots, x_n].$  The \emph{upper-Koszul simplicial complex} of $I$ at multidegree  $\textbf{b} \in \mathbb{N}^n $ is 
\begin{equation}\label{eq:upperKoszul}
    K_{\textbf{b}} (I)= \{ F \subseteq \{x_1,\ldots, x_n\} ~:~ \frac{x^{\textbf{b}}}{x^F} \in I \} 
\end{equation}
where $\displaystyle x^F = \prod_{x_i \in F} x_i.$
\end{definition}

\begin{theorem}\cite{bayer1999extremal}[Theorem 2.2]\label{thm:bettiKoszul}
Given a monomial ideal $I$ in $R=k[x_1,\ldots, x_n]$, the multigraded Betti numbers of $I$ are
$$\beta_{i,\textbf{b}} (I) = \dim_k \widetilde{H}_{i-1} (K_{\textbf{b}} (I);k)$$
where $\textbf{b} \in \mathbb{N}^n$.
\end{theorem}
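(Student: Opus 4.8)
The plan is to prove this the way one proves the classical Hochster formula, but with the Koszul complex on the variables in place of the Stanley--Reisner chain complex, so that squarefreeness is never used; since the result is quoted from \cite{bayer1999extremal}, what follows is the argument one would write out.

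First I would reduce to $\Tor$: the multigraded minimal free resolution of $I$ computes these dimensions, so $\beta_{i,\textbf{b}}(I)=\dim_k\Tor_i^R(I,k)_{\textbf{b}}$ for every $\textbf{b}\in\mathbb{N}^n$. To evaluate $\Tor$ I would resolve $k$ rather than $I$: let $K_\bullet=K_\bullet(x_1,\dots,x_n;R)$ be the Koszul complex on the variables, a $\mathbb{Z}^n$-graded free resolution of $k=R/(x_1,\dots,x_n)$, whose $p$-th term has $R$-basis $\{e_F:F\subseteq\{x_1,\dots,x_n\},\ |F|=p\}$ with $e_F$ in multidegree $\mathbf{e}_F=\sum_{x_i\in F}\mathbf{e}_i$ and differential $\partial e_F=\sum_{x_i\in F}(-1)^{\epsilon(i,F)}x_i\,e_{F\setminus x_i}$ for the usual sign $\epsilon(i,F)$. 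Then $\Tor_i^R(I,k)=H_i(I\otimes_R K_\bullet)$, and since this complex is $\mathbb{Z}^n$-graded I may take homology one multidegree at a time, so $\Tor_i^R(I,k)_{\textbf{b}}=H_i\big((I\otimes_R K_\bullet)_{\textbf{b}}\big)$.

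Next I would identify the strand $(I\otimes_R K_\bullet)_{\textbf{b}}$ with the reduced chain complex of $K_{\textbf{b}}(I)$. In multidegree $\textbf{b}$ the summand coming from $e_F$ is $I_{\textbf{b}-\mathbf{e}_F}\cdot e_F$, and because $I$ is a monomial ideal this space is one-dimensional exactly when $\textbf{b}-\mathbf{e}_F\in\mathbb{N}^n$ and $x^{\textbf{b}}/x^F=x^{\textbf{b}-\mathbf{e}_F}\in I$, and is $0$ otherwise. Hence a $k$-basis of $(I\otimes_R K_p)_{\textbf{b}}$ is indexed precisely by the $p$-subsets $F$ with $x^{\textbf{b}}/x^F\in I$, i.e.\ by the $(p-1)$-faces of $K_{\textbf{b}}(I)$; along the way I would record that $K_{\textbf{b}}(I)$ really is a simplicial complex, since $x^{\textbf{b}}/x^F\in I$ and $G\subseteq F$ force $x^{\textbf{b}}/x^G=(x^{\textbf{b}}/x^F)\,x^{F\setminus G}\in I$. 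Tracking the differential, $\partial$ applied to the generator $(x^{\textbf{b}}/x^F)\,e_F$ gives $\sum_{x_i\in F}(-1)^{\epsilon(i,F)}(x^{\textbf{b}}/x^{F\setminus x_i})\,e_{F\setminus x_i}$, and after identifying each one-dimensional strand with $k$ the coefficient of $e_{F\setminus x_i}$ is $\pm1$ precisely when $x^{\textbf{b}}/x^{F\setminus x_i}\in I$, which is automatic here by downward closure. Thus, term by term, $(I\otimes_R K_p)_{\textbf{b}}\cong\widetilde{C}_{p-1}(K_{\textbf{b}}(I);k)$ and the Koszul differential becomes the simplicial boundary, the face $F=\emptyset$ contributing in homological degree $0$ and recovering $\beta_{0,\textbf{b}}(I)\neq0\iff x^{\textbf{b}}$ is a minimal generator of $I$. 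Taking homology then yields
\[
\beta_{i,\textbf{b}}(I)=\dim_k H_i\big((I\otimes_R K_\bullet)_{\textbf{b}}\big)=\dim_k\widetilde{H}_{i-1}(K_{\textbf{b}}(I);k).
\]

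The genuine content, as opposed to routine verification, is the recognition that the $\textbf{b}$-graded strand of $I\otimes_R K_\bullet$ has the shape of a simplicial chain complex at all; this is exactly where $I$ being an ideal (hence the downward closure of $K_{\textbf{b}}(I)$) enters, and it is the step I would be most careful about. The remaining obstacle is pure bookkeeping: keeping the homological shift $i\leftrightarrow i-1$ and the reduced-homology/empty-face convention straight, and checking that the Koszul sign rule $\epsilon(i,F)$ is compatible with the orientation convention used in the simplicial boundary, so that the two complexes agree on the nose rather than merely up to sign. (Alternatively one could deduce the formula from the squarefree Hochster formula via polarization, but the Koszul-complex argument above is self-contained and is the natural generalization.)
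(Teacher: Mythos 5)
Your argument is correct and is essentially the standard proof of this formula: the paper itself states the result without proof, citing Bayer--Charalambous--Popescu, and the argument given there (and in Miller--Sturmfels) is exactly your computation of $\Tor_i^R(I,k)_{\textbf{b}}$ via the Koszul complex on the variables, identifying the multidegree-$\textbf{b}$ strand of $I\otimes_R K_\bullet$ with the shifted reduced chain complex of $K_{\textbf{b}}(I)$. Your handling of the one-dimensional graded pieces, the downward closure of $K_{\textbf{b}}(I)$, and the empty-face/degree-shift bookkeeping is all sound, so nothing further is needed.
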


\begin{remark}\label{rem:GradedBetti}
Let $I$ be monomial ideal in $R=k[x_1,\ldots, x_n]$ and $\textbf{b} \in \mathbb{N}^n $. If $x^\textbf{b}$ is not equal to a least common multiple of some of the minimal generators of $I,$ then $K_{\textbf{b}} (I)$ is a cone over some subcomplex. Therefore, all non-zero Betti numbers of $I$ occurs in $\mathbb{N}^n$-graded degrees $\textbf{b}$ such that $x^\textbf{b}$ is equal to a least common multiple of some minimal generators of $I.$ 
\end{remark}

\subsection{Betti Splitting}

In the study of Betti numbers of a monomial ideal $I$, one natural approach is to break down the ideal $I$ into smaller pieces and express the Betti numbers of $I$ in terms of the Betti numbers of the smaller pieces.  This strategy was first introduced by Eliahou and Kervaire in \cite{eliahou1990minimal} for monomial ideals and studied in more detail by Francisco, H\`a, and Van Tuyl in \cite{francisco2009splittings}.

\begin{definition}\label{def:bettiSplit}
Let $I, J,$ and $K$ be monomial ideals with generating sets $\mathcal{G} (I),\mathcal{G}(J),$ and $\mathcal{G}(K)$ such that $\mathcal{G}(I)$ is the disjoint union of $\mathcal{G}(J)$ and $\mathcal{G}(K).$ Then $I=J+K$ is called a \emph{Betti splitting} if 
$$\beta_{i,j} (R/I) =\beta_{i,j} (R/J) + \beta_{i,j} (R/K) +\beta_{i-1,j} (R/ J \cap K)$$
for all $i,j >1$.
\end{definition}



The following result is quite useful as it provides a sufficient condition for $I=J+K$ to be a Betti splitting by considering just the generators of $I$.

\begin{theorem}\cite{francisco2009splittings}[Corollary 2.7]\label{thm:bettiSplit}
Suppose that $I=J+K$ where $\mathcal{G} (J)$ contains all the generators of $I$ divisible by some variables $x_i$ and $\mathcal{G}(K)$ is a non-empty set containing the remaining generators of $I.$ If $J$ has a linear resolution, then $I=J+K$ is a Betti splitting.
\end{theorem}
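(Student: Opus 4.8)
The plan is to verify the defining identity of a Betti splitting by analyzing the long exact sequence in $\Tor^R(-,k)$ attached to the short exact sequence
\[
0\longrightarrow J\cap K \xrightarrow{a\mapsto(a,-a)} J\oplus K \xrightarrow{(b,c)\mapsto b+c} I\longrightarrow 0,
\]
and to force the relevant connecting maps to vanish by a multidegree comparison. Write $S$ for the set of variables in the hypothesis, so that $\mathcal{G}(J)$ is exactly the set of minimal generators of $I$ divisible by some variable of $S$, while $\mathcal{G}(K)=\mathcal{G}(I)\setminus\mathcal{G}(J)$ consists of those divisible by none of them; put $\mathfrak{m}_S=(x_i : x_i\in S)$. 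Since the minimal free resolution of $R/M$ is that of $M$ shifted by one homological degree, it suffices to prove
\[
\beta_{i,\textbf{b}}(I)=\beta_{i,\textbf{b}}(J)+\beta_{i,\textbf{b}}(K)+\beta_{i-1,\textbf{b}}(J\cap K)\qquad(i\ge 1,\ \textbf{b}\in\N^n).
\]
Tensoring the short exact sequence with $k$ yields a long exact sequence of $\N^n$-graded $\Tor$ modules; if the map $\varphi_i\colon\Tor_i^R(J\cap K,k)\to\Tor_i^R(J,k)\oplus\Tor_i^R(K,k)$ in it vanishes for every $i$, the sequence breaks into short exact sequences that give the displayed identity. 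As $\varphi_i=(\iota_J,-\iota_K)$ with $\iota_J,\iota_K$ induced by the inclusions $J\cap K\hookrightarrow J$ and $J\cap K\hookrightarrow K$, and as all these maps are $\N^n$-graded of degree $0$, it is enough to check, in each homological degree, that no multidegree carrying a nonzero Betti number of $J\cap K$ also carries one of $J$, and none also carries one of $K$.

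For the comparison with $J$, I would invoke the linear resolution: all minimal generators of $J$ have one degree $d$, and $\beta_{i,\textbf{b}}(J)\ne 0$ forces $|\textbf{b}|=d+i$. On the other hand $J\cap K$ is generated by the monomials $\lcm(m,m')$ with $m\in\mathcal{G}(J)$ and $m'\in\mathcal{G}(K)$; here $m\ne m'$ (the union is disjoint), and neither divides the other, as either divisibility would make a minimal generator of $I$ redundant, so $\lcm(m,m')$ is a proper multiple of $m$ and has degree $\ge d+1$. Hence $J\cap K$ is generated in degrees $\ge d+1$, and by \Cref{thm:bettiKoszul} a nonzero $\widetilde{H}_{i-1}(K_{\textbf{b}}(J\cap K);k)$ needs an $(i-1)$-dimensional face $F$ of $K_{\textbf{b}}(J\cap K)$, so $x^{\textbf{b}}/x^F\in J\cap K$ and $|\textbf{b}|\ge i+(d+1)$. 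The ranges $|\textbf{b}|=d+i$ and $|\textbf{b}|\ge d+i+1$ are disjoint, so $\iota_J=0$.

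For the comparison with $K$, I would use that no minimal generator of $K$ involves a variable of $S$: thus $K=K'R$ for a monomial ideal $K'$ of the subring $R'=k[x_j : x_j\notin S]$, its minimal free resolution over $R$ is obtained from that of $K'$ over $R'$ by extension of scalars, and every $\textbf{b}$ with $\beta_{i,\textbf{b}}(K)\ne 0$ is supported off $S$, i.e.\ has $S$-degree $0$. Conversely, every minimal generator of $J$ lies in $\mathfrak{m}_S$, so $J\cap K\subseteq J\subseteq\mathfrak{m}_S$, and by \Cref{rem:GradedBetti} any $\textbf{b}$ with $\beta_{i,\textbf{b}}(J\cap K)\ne 0$ has $x^{\textbf{b}}$ equal to a least common multiple of minimal generators of $J\cap K$, hence $x^{\textbf{b}}\in\mathfrak{m}_S$ and $\textbf{b}$ has positive $S$-degree. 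Again the two families of multidegrees are disjoint, so $\iota_K=0$, and the identity follows.

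The vanishing of $\iota_K$ is the point I expect to be most delicate: in general $J\cap K$ and $K$ really can share multigraded Betti numbers in the same homological degree, and it is exactly the twin facts that $\mathcal{G}(J)$ absorbs \emph{every} generator divisible by a variable of $S$ (so $K$, resolution and all, lives over the remaining variables) and that $J\cap K\subseteq\mathfrak{m}_S$ that make the overlap empty. Both hypotheses are used essentially: allowing $\mathcal{G}(J)$ to contain the distinguished generators but also others, or weakening ``linear resolution'' to ``generated in one degree,'' each breaks the argument — in the latter case one only gets $|\textbf{b}|\ge d+i$ for the Betti degrees of $J$, which no longer separates them from those of $J\cap K$.
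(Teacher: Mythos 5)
The paper gives no proof of this statement; it is quoted verbatim from Francisco--H\`a--Van Tuyl, so the only comparison available is with their argument, and your proposal is correct and essentially reproduces it: reduce to the vanishing of the maps $\Tor_i^R(J\cap K,k)\to\Tor_i^R(J,k)$ and $\Tor_i^R(J\cap K,k)\to\Tor_i^R(K,k)$ via the long exact sequence, kill the first by the degree count (linear resolution of $J$ in degree $d$ versus $J\cap K$ generated by lcms of distinct minimal generators, hence in degrees $\geq d+1$), and kill the second by comparing degrees in the distinguished variables ($K$ lives over the remaining variables while $J\cap K\subseteq\mathfrak{m}_S$). Your reading of ``some variables $x_i$'' as a set $S$ is a harmless mild generalization of the single-variable statement in the original reference, and the single-variable case is all the paper uses (in \Cref{thm:completeSink}, with $x_n$).
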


\section{General Results}\label{ch2}

In this section, we recall the notion of inducedness for weighted oriented graphs and introduce a related notion called \emph{weight reduced form} of a weighted oriented graph. Weight reduced form of a weighted oriented graph $\DD$ has the same underlying graph as $\DD$ and the weight of one of the non-trivial vertices in $\DD$ is reduced by one. We call the process of obtaining a weight reduced form of $\DD$ a \emph{weight reduction process}. In the main result of this section, we show that the weight reduction of a non-trivial sink vertex reduces the regularity by one but keeps the projective dimension unchanged.

\begin{definition}
Let $\DD$ be a weighted oriented graph with the underlying graph $G$. We say $\DD'$ is an \emph{induced weighted oriented subgraph} of $\DD$ if
\begin{itemize}
    \item the underlying graph of $\DD'$ is an induced subgraph of $G,$
    \item orientation of $\DD'$ is induced from $\DD,$ i.e., $E(\DD') \subseteq E(\DD)$, and
    \item $w_x (\DD') = w_x (\DD)$ for all $x \in V(\DD')$
\end{itemize}
where $w_x(\DD)$ denotes the weight of $x$ in $\DD$ and $w_x(\DD')$ denotes the weight of $x$ in $\DD'$.  We shall use this notation throughout the text while considering weight of a vertex in two different weighted oriented graphs. 
\end{definition}

It is well-known that Betti numbers of edge ideals of induced subgraphs for unweighted unoriented graphs can not exceed that of the original graph (see \cite[Lemma 4.2]{beyarslan2015regularity}). We provide an analog of this well-known result for weighted oriented graphs in the following lemma.

\begin{lemma}\label{lem:induced}
Let $\DD'$ be an induced weighted oriented subgraph of $\DD$. Then, for all $i,j\geq 0$, we have
$$\beta_{i,j} ( I(\DD')) \leq \beta_{i,j} ( I(\DD)).$$
\end{lemma}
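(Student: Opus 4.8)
The plan is to realize $R/I(\DD')$ as a quotient of $R/I(\DD)$ in a way compatible with the multigrading, or better, to exploit the combinatorial description of Betti numbers via upper-Koszul simplicial complexes from \Cref{thm:bettiKoszul}. Since $\DD'$ is induced, its vertex set $V(\DD')$ is a subset of $V(\DD)$, its edges are exactly the edges of $\DD$ among those vertices, and the weights agree; hence $I(\DD')$, viewed inside $R = k[x_1,\ldots,x_n]$, is generated by the subset of generators of $I(\DD)$ supported on $V(\DD')$. In particular $I(\DD') \subseteq I(\DD)$, and moreover $I(\DD')$ is obtained from $I(\DD)$ by deleting generators. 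I would first reduce to this purely ideal-theoretic statement: if $I' \subseteq I$ are monomial ideals and $\mathcal{G}(I') \subseteq \mathcal{G}(I)$, it is \emph{not} true in general that $\beta_{i,j}(R/I') \le \beta_{i,j}(R/I)$, so the induced structure must be used more carefully — the key extra feature is that the generators we keep involve only the variables in $V(\DD')$ and the generators we discard all involve at least one variable outside $V(\DD')$.

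The main step is then a comparison of upper-Koszul complexes. Fix a multidegree $\textbf{b} \in \mathbb{N}^n$; by \Cref{rem:GradedBetti} we may assume $x^{\textbf{b}} = \lcm$ of some minimal generators of the relevant ideal. For $I(\DD')$, any such $\textbf{b}$ is supported only on the variables indexed by $V(\DD')$; call this variable set $S$ and let $R_S = k[x_i : x_i \in S]$. I would show that $K_{\textbf{b}}(I(\DD'))$, computed in $R$, is the same complex (after discarding cone points) whether computed over $R$ or over $R_S$, and then exhibit an injection of reduced homology
$$\widetilde{H}_{i-1}(K_{\textbf{b}}(I(\DD'));k) \hookrightarrow \widetilde{H}_{i-1}(K_{\textbf{b}}(I(\DD));k).$$
The natural candidate is the inclusion of simplicial complexes $K_{\textbf{b}}(I(\DD')) \subseteq K_{\textbf{b}}(I(\DD))$: indeed if $\frac{x^{\textbf{b}}}{x^F} \in I(\DD')$ then certainly $\frac{x^{\textbf{b}}}{x^F} \in I(\DD)$, so every face of the former is a face of the latter. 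The content is to check that the induced map on reduced homology is injective. Here is where I expect the real work: I would argue that a chain in $K_{\textbf{b}}(I(\DD))$ that bounds must already bound using faces $F \subseteq S$, because $\textbf{b}$ is supported on $S$ and a generator of $I(\DD)$ involving a variable outside $S$ cannot divide $x^{\textbf{b}}/x^F$ unless $F$ picks up that outside variable — but then one tracks how such faces can appear. Possibly cleaner: decompose $K_{\textbf{b}}(I(\DD))$ and use a deformation retract or a Mayer–Vietoris / long exact sequence argument to see that the subcomplex spanned by faces inside $S$ is a retract, giving a splitting of the homology and hence injectivity.

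An alternative, and perhaps the cleanest, route: use the fact that for monomial ideals, deleting from $I$ all generators divisible by a fixed variable $x_k$ and then setting $x_k = 0$ — i.e., passing to $(I : x_k^\infty) \cap R_{\hat k}$-type operations, or more simply the ideal $I(\DD')$ is the "restriction" of $I(\DD)$ — behaves well. Concretely, $R/I(\DD')$ can be shown to be a direct summand, as a multigraded module, of a module built from $R/I(\DD)$ via an Alexander-duality or a repeated application of the short exact sequence relating $I$, $I : x_k$, and $(I, x_k)$; one variable at a time, removing the vertices of $V(\DD) \setminus V(\DD')$, and noting that at each stage the relevant Tor groups only grow. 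The hard part in every approach is the same: controlling how the discarded generators (those touching removed vertices) could only \emph{add} syzygies and never cancel existing ones in the multigraded resolution. I would isolate this as the crux — an injectivity statement on a specific piece of (co)homology — and handle the rest by bookkeeping with \Cref{thm:bettiKoszul} and \Cref{rem:GradedBetti}.
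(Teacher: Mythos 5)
Your overall strategy---compare the two ideals multidegree by multidegree via \Cref{thm:bettiKoszul}, concentrating on multidegrees supported on $S=V(\DD')$---is the right one, and it is the same family of argument as the proof the paper adapts from \cite[Lemma 4.2]{beyarslan2015regularity}. However, the step you yourself flag as ``the real work'' (injectivity of $\widetilde{H}_{i-1}(K_{\mathbf{b}}(I(\DD'));k)\to \widetilde{H}_{i-1}(K_{\mathbf{b}}(I(\DD));k)$, to be extracted from a deformation retract or a Mayer--Vietoris argument) is never proved, and as posed it is the wrong target: for a general inclusion of simplicial complexes the induced map on reduced homology need not be injective, so your plan rests entirely on an unestablished claim. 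The observation that closes the gap, and makes all of that machinery unnecessary, is that for every $\mathbf{b}$ with $\mathrm{supp}(\mathbf{b})\subseteq S$ the two complexes are \emph{equal}. Indeed, if $F\in K_{\mathbf{b}}(I(\DD))$ then $x^F$ divides $x^{\mathbf{b}}$, so $F\subseteq \mathrm{supp}(\mathbf{b})\subseteq S$ (the faces you propose to ``track,'' those picking up a vertex outside $S$, simply cannot occur); and any minimal generator of $I(\DD)$ dividing the monomial $x^{\mathbf{b}}/x^F$, whose support lies in $S$, must itself be supported on $S$, hence---precisely because $\DD'$ is induced, with the same orientations and the same weights---it is a minimal generator of $I(\DD')$. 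Thus $K_{\mathbf{b}}(I(\DD))=K_{\mathbf{b}}(I(\DD'))$ and $\beta_{i,\mathbf{b}}(I(\DD'))=\beta_{i,\mathbf{b}}(I(\DD))$ for all such $\mathbf{b}$, with no homological comparison needed.

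With that equality in hand, the conclusion is pure bookkeeping, which you gesture at but do not pin down: every nonzero $\beta_{i,\mathbf{b}}(I(\DD'))$ occurs at a multidegree $\mathbf{b}$ supported on $S$ (by \Cref{rem:GradedBetti}, since lcm's of generators of $I(\DD')$ are supported on $S$; equivalently, if $b_k>0$ for some $x_k\notin S$ then $K_{\mathbf{b}}(I(\DD'))$ is a cone with apex $x_k$), and $\beta_{i,j}=\sum_{|\mathbf{b}|=j}\beta_{i,\mathbf{b}}$, so restricting that sum to multidegrees supported on $S$ gives $\beta_{i,j}(I(\DD'))\le\beta_{i,j}(I(\DD))$. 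Your third-paragraph alternatives (Alexander duality, iterating short exact sequences relating $I$, $I:x_k$ and $(I,x_k)$ one deleted vertex at a time) would not obviously yield monotonicity of graded Betti numbers and should be discarded in favor of the direct equality of upper-Koszul complexes above.
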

\begin{proof}
Since the notion of inducedness for weighted oriented graphs is an extension of inducedness for unweighted, unoriented graphs, we can adopt the proof of \cite[Lemma 4.2]{beyarslan2015regularity} to obtain the inequality.
\end{proof}

\begin{corollary}\label{cor:induced}
Let $\DD'$ be an induced weighted oriented subgraph of $\DD$. Then 
$$\pdim (R/ I(\DD')) \leq \pdim (R/ I(\DD)) \text{ and }  \reg (R/ I(\DD')) \leq \reg ( R/I(\DD)).$$
\end{corollary}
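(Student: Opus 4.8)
The plan is to deduce both inequalities directly from \Cref{lem:induced}, by translating the statement about the graded Betti numbers of the edge ideals into one about the graded Betti numbers of the quotient rings, and then reading off the projective dimension and regularity from their defining maxima.

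First I would recall the standard relation $\beta_{i,j}(R/I) = \beta_{i-1,j}(I)$ for all $i \ge 1$ (together with $\beta_{0,0}(R/I)=1$ and $\beta_{0,j}(R/I)=0$ for $j>0$), valid for any homogeneous ideal $I$; it comes from splicing the augmentation $R \to R/I$ onto a minimal free resolution of $I$. Combined with \Cref{lem:induced}, this yields $\beta_{i,j}(R/I(\DD')) \le \beta_{i,j}(R/I(\DD))$ for every $i \ge 1$ and $j \ge 0$, with equality in homological degree $0$. Here I should note that a priori the two edge ideals live in different polynomial rings, since $V(\DD') \subseteq V(\DD)$; this is harmless, because graded Betti numbers are unchanged under the flat extension adjoining the extra variables, and in any case \Cref{lem:induced} is already phrased so as to absorb this point.

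Next, from $\beta_{i,j}(R/I(\DD')) \le \beta_{i,j}(R/I(\DD))$ it follows that every pair $(i,j)$ with $\beta_{i,j}(R/I(\DD')) \neq 0$ also satisfies $\beta_{i,j}(R/I(\DD)) \neq 0$. Hence the index set over which the maximum defining $\pdim(R/I(\DD'))$ is taken is contained in the corresponding index set for $R/I(\DD)$, which gives $\pdim(R/I(\DD')) \le \pdim(R/I(\DD))$; applying the same containment of index sets to the quantity $j-i$ instead of $i$ yields $\reg(R/I(\DD')) \le \reg(R/I(\DD))$.

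I do not expect a serious obstacle: the real content is already in \Cref{lem:induced}, and what remains is the routine passage from the Betti numbers of $I$ to those of $R/I$ together with the monotonicity of a maximum under inclusion of its index set. The only step that warrants a moment's care is the bookkeeping with the homological shift and the change of ambient polynomial ring mentioned above.
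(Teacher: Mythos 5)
Your proposal is correct and is exactly the routine deduction the paper intends: \Cref{cor:induced} is stated as an immediate consequence of \Cref{lem:induced}, with the passage from $\beta_{i,j}(I)$ to $\beta_{i,j}(R/I)$ and the monotonicity of the defining maxima left implicit. Your extra care about the homological shift and the change of ambient polynomial ring is sound and does not change the argument.
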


In the study of edge ideals of weighted oriented graphs, we assume that source vertices have trivial weights. In a similar vein, one can ask whether the same treatment can be applied to sink vertices. In  \cite{ha2019edge}, authors assume that sinks vertices have trivial weights along with source vertices  while investigating Cohen-Macaulayness of edge ideals. In the investigation of algebraic invariants such as regularity, reducing the weight of a non-trivial sink vertex changes the regularity. Thus, one needs to carefully consider the effects of weight reduction process on the multigraded Betti numbers of edge ideals and its  invariants associated to Betti numbers. For this purpose, we introduce a new notion called  \emph{weight reduced form} of a given weighted oriented graph.

\begin{definition}
Let $\DD$ be a weighted oriented graph with at least one non-trivial vertex $x$ such that $w_x (\DD)>1.$ We say $\DD'$ is a \emph{weight reduced form} of $\DD$ if
\begin{itemize}
    \item $\DD'$ has the same orientation as $\DD$,
    \item $w_y(\DD') = w_y (\DD)$ for all $y \neq x$, and
    \item $w_x(\DD') = w_x (\DD)-1$.
\end{itemize}
Depending on the context, we may specify the vertex that is used for weight reduction and say $\DD'$ is a weight reduced form of $\DD$ on $x$.
\end{definition}

We use the following notation throughout this section.

\begin{notation}\label{not:decrease}
Let $\mathcal{D}$ be a weighted oriented graph on $n$ vertices. If  $\mathcal{D}$ has at least one sink vertex with a non-trivial weight, say $x_p$, such that $w_p(\mathcal{D}) =w>1$, then in-neighbors of $x_p$, denoted by  $N_{\mathcal{D}}^- (x_p),$ coincide with all of its neighbors, denoted by $N_{\mathcal{D}} (x_p),$ where 
$$N_{\mathcal{D}}^- (x_p) =N_{\mathcal{D}} (x_p)=  \{ x : (x,x_p) \in E(\mathcal{D}) \} \neq \emptyset.$$ 
We can decompose the edge ideal of $\DD$ as
$$I(\DD) = I(\mathcal{D}\setminus x_p) + \underbrace{(xx_p^w :~ x \in N_{\mathcal{D}} (x_p))}_{J}$$
where $\G(I(\mathcal{D}\setminus x_p) ) = \G(I(\DD)) \setminus \G(J)$.

Let $\DD'$ be the weighted reduced form of $\DD$  on $x_p$. Then
$$I(\DD') = I(\mathcal{D}\setminus x_p)  + \underbrace{(xx_p^{w-1} :~ x \in N_{\mathcal{D}} (x_p))}_{J'}.$$

Note that $\DD\setminus x_p$ is an induced weighted oriented subgraph of $\DD$ and $\DD'$.
\end{notation}

 \begin{remark}
 In the light of \Cref{rem:GradedBetti}, one can observe that for all $i>0$ and $\textbf{b} \in \mathbb{N}^n$,
 \begin{align*}
   \beta_{i,\textbf{b}} ~(I(\DD))   &=0 \text{ if } \textbf{b}_p \neq 0,w,\\
   \beta_{i,\textbf{b}} ~(I(\DD')) & =0 \text{ if } \textbf{b}_p \neq 0,w-1,\\
   \beta_{i,\textbf{b}} ~(I(\DD \setminus x_p)) & =0 \text{ if } \textbf{b}_p \neq 0
 \end{align*}
      because the upper-Koszul complexes of these ideals at the corresponding multidegrees are all cones with apex $x_p$. 
For the sake of completion, we include the proof below for one of the ideals. 
 \end{remark}

\begin{lemma}
Let $\textbf{b} \in \N^n$ such that  $ \textbf{b}_p >0$. Then $K_{\textbf{b}} (I(\DD \setminus x_p))$ is a cone with apex $x_p$.  
\end{lemma}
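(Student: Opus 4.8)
The plan is to unwind the definition of the upper-Koszul simplicial complex and exhibit $x_p$ as a vertex that can be freely adjoined to any face. Recall that $K_{\textbf{b}}(I(\DD \setminus x_p)) = \{ F \subseteq \{x_1,\ldots,x_n\} : x^{\textbf{b}}/x^F \in I(\DD \setminus x_p) \}$. To show this complex is a cone with apex $x_p$, it suffices to check that for every face $F$ in the complex we also have $F \cup \{x_p\}$ in the complex; equivalently, that $x_p$ together with any face spans a face, which is exactly the cone condition.

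First I would take an arbitrary $F \in K_{\textbf{b}}(I(\DD \setminus x_p))$, so that $m := x^{\textbf{b}}/x^F \in I(\DD \setminus x_p)$. Since $I(\DD \setminus x_p)$ is a monomial ideal, $m$ is divisible by some minimal generator $x_i x_j^{w_j}$ of $I(\DD \setminus x_p)$, coming from an edge $(x_i,x_j) \in E(\DD \setminus x_p)$. The key observation is that neither $x_i$ nor $x_j$ equals $x_p$, because $x_p$ is not a vertex of $\DD \setminus x_p$; hence this generator does not involve the variable $x_p$ at all. Now I would consider $F' = F \cup \{x_p\}$. If $x_p \in F$ already there is nothing to prove, so assume $x_p \notin F$. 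Then $x^{F'} = x^F \cdot x_p$, and $x^{\textbf{b}}/x^{F'} = m / x_p$. Since $\textbf{b}_p > 0$ and $x_p \notin F$, the exponent of $x_p$ in $m = x^{\textbf{b}}/x^F$ is $\textbf{b}_p \geq 1$, so $m/x_p$ is still a genuine monomial in $R$ (no negative exponents). Moreover $m/x_p$ is still divisible by the generator $x_i x_j^{w_j}$, because that generator has no $x_p$ in it and dividing $m$ by $x_p$ only lowers the $x_p$-exponent. Therefore $m/x_p \in I(\DD \setminus x_p)$, which means $F' = F \cup \{x_p\} \in K_{\textbf{b}}(I(\DD \setminus x_p))$.

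This establishes that adjoining $x_p$ to any face yields a face, so $K_{\textbf{b}}(I(\DD \setminus x_p))$ is a cone with apex $x_p$, as claimed. I should also note the small edge case that the empty set may or may not lie in the complex, but the cone statement is about faces that are present: if the complex is empty (the void complex) the statement is vacuous, and otherwise the argument above applies to every face including $\emptyset$ if it is present. The main (and only mild) obstacle is bookkeeping the $x_p$-exponent to make sure $m/x_p$ has nonnegative exponents, which is precisely where the hypothesis $\textbf{b}_p > 0$ is used; everything else is a direct unwinding of definitions. As an immediate consequence $\widetilde{H}_{i-1}(K_{\textbf{b}}(I(\DD \setminus x_p));k) = 0$ for all $i$, recovering the vanishing $\beta_{i,\textbf{b}}(I(\DD \setminus x_p)) = 0$ when $\textbf{b}_p > 0$ via \Cref{thm:bettiKoszul}.
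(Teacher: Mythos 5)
Your proof is correct and follows essentially the same route as the paper: take a face $F$ with $x_p \notin F$, note that the witnessing generator of $I(\DD \setminus x_p)$ involves no $x_p$ while the $x_p$-exponent of $x^{\textbf{b}}/x^F$ is $\textbf{b}_p > 0$, and conclude that dividing by $x_p$ keeps the monomial in the ideal, so $F \cup \{x_p\}$ is a face. The extra remarks on the void complex and the resulting vanishing of $\beta_{i,\textbf{b}}$ are fine but not needed.
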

\begin{proof}
Denote the upper-Koszul complex $K_{\textbf{b}} (I(\DD \setminus x_p))$ by $\Delta$. It suffices to show that $F \cup \{x_p\} \in \D$ for all $F\in \D$.  Let $F \in \Delta$ such that $x_p \notin F$.  Then 
$$\displaystyle m:= \frac{x_1^{b_1}  \cdots x_p^{b_p} \cdots x_n^{b_n}}{x^{F }} \in I(\DD \setminus x_p).$$
Thus there exists $e \in \mathcal{G} (I(\DD \setminus x_p))$ such that $m=e m'$ for some $m' \in R$. Since $b_p >0$ and $x_p \notin F$, the monomial $m'$ is divisible by $x_p$. Hence 
$$\displaystyle \frac{m}{x_p}= \frac{x_1^{b_1}  \cdots x_p^{b_p} \cdots x_n^{b_n}}{x^{F \cup \{x_p\} }} \in I(\DD \setminus x_p)$$
implying that $F \cup \{x_p\} \in \D$. Therefore, $\D$ is a cone with apex $x_p$.
\end{proof}

Since only non-zero Betti numbers for the edge ideals of interest can occur at multidegrees $\textbf{b} \in \mathbb{N}^n$ where $\textbf{b}_p \in \{0,w\}$ for $I(\DD)$ and $\textbf{b}_p \in \{0,w-1\}$ for $I(\DD'),$ we consider these two cases separately by finding relations between the corresponding upper-Koszul simplicial complexes.

\begin{lemma}\label{lem:reduce1}
Let $\textbf{b} \in \N^n$ such that  $ \textbf{b}_p =0$. Then
$$\beta_{i,\textbf{b}} (I(\DD)) = \beta_{i,\textbf{b}} (I(\DD')) = \beta_{i,\textbf{b}} (I(\DD \setminus x_p))$$
\end{lemma}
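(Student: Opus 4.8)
The plan is to show that for every $\textbf{b}\in\N^n$ with $\textbf{b}_p=0$ the three upper-Koszul simplicial complexes $K_{\textbf{b}}(I(\DD))$, $K_{\textbf{b}}(I(\DD'))$ and $K_{\textbf{b}}(I(\DD\setminus x_p))$ coincide, and then invoke \Cref{thm:bettiKoszul}. First I would observe that if $F$ belongs to any of these complexes then, by definition \eqref{eq:upperKoszul}, the quotient $x^{\textbf{b}}/x^F$ must be a genuine monomial of $R$, so $x^F$ divides $x^{\textbf{b}}$; since $\textbf{b}_p=0$ this forces $x_p\notin F$, and therefore $x^{\textbf{b}}/x^F$ is a monomial in which $x_p$ does not appear.

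Next I would use the decompositions from \Cref{not:decrease}, namely $I(\DD)=I(\DD\setminus x_p)+J$ with $J=(xx_p^{w}:x\in N_{\DD}(x_p))$ and $I(\DD')=I(\DD\setminus x_p)+J'$ with $J'=(xx_p^{w-1}:x\in N_{\DD}(x_p))$. Because $x_p$ is non-trivial we have $w\geq 2$, so every minimal generator of $J$ and of $J'$ is divisible by $x_p$ (to the power $w$ and $w-1\geq 1$ respectively). A monomial $m$ lies in a sum of monomial ideals exactly when it is divisible by some minimal generator of one of the summands; hence a monomial not divisible by $x_p$ lies in $I(\DD)$ if and only if it lies in $I(\DD\setminus x_p)$, and likewise lies in $I(\DD')$ if and only if it lies in $I(\DD\setminus x_p)$. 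Applying this with $m=x^{\textbf{b}}/x^F$ for each $F$ with $x_p\notin F$ (the only $F$ that can occur, by the previous paragraph) yields
$$K_{\textbf{b}}(I(\DD))=K_{\textbf{b}}(I(\DD\setminus x_p))=K_{\textbf{b}}(I(\DD')).$$

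Finally, equal simplicial complexes have equal reduced simplicial homology over $k$ in every degree, so \Cref{thm:bettiKoszul} gives $\beta_{i,\textbf{b}}(I(\DD))=\beta_{i,\textbf{b}}(I(\DD\setminus x_p))=\beta_{i,\textbf{b}}(I(\DD'))$ for all $i$, as claimed. I do not anticipate a genuine obstacle here: the argument is a direct unwinding of the definitions, and the only point that needs care is the bookkeeping that $x_p\notin F$ for admissible faces together with the fact that $w-1\geq 1$, which is exactly where the non-triviality hypothesis on $x_p$ enters.
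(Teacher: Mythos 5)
Your proposal is correct and follows essentially the same route as the paper: both prove $K_{\textbf{b}}(I(\DD))=K_{\textbf{b}}(I(\DD'))=K_{\textbf{b}}(I(\DD\setminus x_p))$ using the decompositions of \Cref{not:decrease} and the observation that, since $\textbf{b}_p=0$, no generator of $J$ or $J'$ (all divisible by $x_p$) can divide $x^{\textbf{b}}/x^F$, and then apply \Cref{thm:bettiKoszul}. The only cosmetic difference is that the paper gets one set of inclusions for free from $I(\DD\setminus x_p)\subseteq I(\DD)\subseteq I(\DD')$ and argues the reverse containment, whereas you verify both directions at once via the divisibility characterization.
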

\begin{proof}
It suffices to show 
$$K_{\textbf{b}} (I(\DD))= K_{\textbf{b}} (I(\DD')) = K_{\textbf{b}} (I(\DD \setminus x_p))$$
by \Cref{thm:bettiKoszul}. It is immediate from the chain of inclusions $I(\DD \setminus x_p) \subseteq I(\DD) \subseteq I(\DD')$ that $$ K_{\textbf{b}} (I(\DD \setminus x_p)) \subseteq K_{\textbf{b}} (I(\DD))  \subseteq K_{\textbf{b}} (I(\DD')).$$
Let $F \in K_{\textbf{b}} (I(\DD))$. Then 
$$\displaystyle m= \frac{x_1^{b_1}  \cdots x_p^{b_p} \cdots x_n^{b_n}}{x^{F }} \in I(\DD).$$
Since $b_p =0$, none of the generators of $J$ divide $m$ and we must have $m \in I(\DD \setminus x_p)$.  Thus $F \in K_{\textbf{b}} (I(\DD \setminus x_p)),$ proving the equality $K_{\textbf{b}} (I(\DD)) =K_{\textbf{b}} (I(\DD \setminus x_p)).$ The remaining equality follows from the same arguments. 
\end{proof}

\begin{lemma}\label{lem:reduce2}
Let $\textbf{b}, \textbf{b}' \in \N^n$ such that $\textbf{b}_p=w$,  $\textbf{b}'_p =w-1$ and $\textbf{b}'_i =\textbf{b}_i$ for all $i\neq p$. Then
$$\beta_{i,\textbf{b}} (I(\DD)) = \beta_{i,\textbf{b}'} (I(\DD'))$$
\end{lemma}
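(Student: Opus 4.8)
The plan is to exhibit an equality (or at least a simplicial isomorphism) between the upper-Koszul complexes $K_{\mathbf b}(I(\DD))$ and $K_{\mathbf b'}(I(\DD'))$ and then invoke Theorem 2.2 (\Cref{thm:bettiKoszul}). Write $N = N_{\DD}(x_p)$, so $I(\DD) = I(\DD\setminus x_p) + J$ with $J = (x x_p^{w} : x\in N)$ and $I(\DD') = I(\DD\setminus x_p) + J'$ with $J' = (x x_p^{w-1} : x\in N)$. Since $\mathbf b_p = w$ and $\mathbf b'_p = w-1$, for a subset $F\subseteq\{x_1,\dots,x_n\}$ the defining condition for $K_{\mathbf b}(I(\DD))$ is that $x^{\mathbf b}/x^F \in I(\DD)$, and similarly with $\mathbf b', I(\DD')$. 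The key observation is that the exponent of $x_p$ in $x^{\mathbf b}/x^F$ equals $w$ if $x_p\notin F$ and $w-1$ if $x_p\in F$, and correspondingly for $x^{\mathbf b'}/x^{F}$ the exponent is $w-1$ or $w-2$. So I would split into the two cases $x_p\in F$ and $x_p\notin F$ and compare memberships directly.

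First I would handle the case $x_p\notin F$. Then $x^{\mathbf b}/x^F$ has $x_p$-degree exactly $w$, so it lies in $I(\DD)$ iff either it is divisible by some generator of $I(\DD\setminus x_p)$ (which involves no $x_p$, hence this is equivalent to $x^{\mathbf b'}/x^F$, which differs only in the $x_p$-exponent being $w-1$ instead of $w$, still being divisible by that same generator — provided $w-1\ge 0$, which holds) or it is divisible by some $x x_p^{w}$ with $x\in N$, i.e. $x\mid x^{\mathbf b}/x^F$ and $x_p^w \mid x^{\mathbf b}/x^F$; the latter is automatic, and the former translates to the same divisibility condition for $x^{\mathbf b'}/x^F$ by $x x_p^{w-1}\in\mathcal G(J')$. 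So $F\in K_{\mathbf b}(I(\DD)) \iff F\in K_{\mathbf b'}(I(\DD'))$ in this subcase. Next, the case $x_p\in F$: here $x^{\mathbf b}/x^F$ has $x_p$-degree $w-1 \ge 1$, and I claim that in this subcase $F\in K_{\mathbf b}(I(\DD))$ iff $F\setminus\{x_p\}\in K_{\mathbf b}(I(\DD))$ — indeed divisibility by a generator of $I(\DD\setminus x_p)$ is insensitive to the $x_p$-exponent as long as it is positive for one and nonnegative for the other, and divisibility by $x x_p^{w}$ is impossible when the $x_p$-degree is only $w-1$; one must be slightly careful and check that membership of $F\setminus\{x_p\}$ (which has $x_p$-degree $w$) cannot be forced by a $J$-generator while that of $F$ is not. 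Resolving that asymmetry cleanly — showing $K_{\mathbf b}(I(\DD))$ is a cone over $\{x_p\}$ restricted to faces not containing $x_p$ in the relevant range, or directly matching faces — is where the bookkeeping lives.

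Putting the two subcases together, I would argue that the map $F \mapsto F$ is a bijection between the face sets, i.e. literally $K_{\mathbf b}(I(\DD)) = K_{\mathbf b'}(I(\DD'))$ as abstract simplicial complexes on the same vertex set, from which $\widetilde H_{i-1}(K_{\mathbf b}(I(\DD));k) = \widetilde H_{i-1}(K_{\mathbf b'}(I(\DD'));k)$ for all $i$ and hence the claimed Betti-number equality by \Cref{thm:bettiKoszul}. The main obstacle I anticipate is not conceptual but the careful case analysis around faces $F$ containing $x_p$: one must confirm that lowering $\mathbf b_p$ from $w$ to $w-1$ does not accidentally destroy or create a face through a generator of $J$ versus $J'$, and that the shift $w\leadsto w-1$ never runs into a negative exponent (which is why the hypothesis $w>1$, equivalently $w-1\ge 1$, matters). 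Once those divisibility equivalences are nailed down the equality of complexes is immediate.
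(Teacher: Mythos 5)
Your overall route coincides with the paper's: both arguments establish the equality of simplicial complexes $K_{\mathbf{b}}(I(\DD)) = K_{\mathbf{b}'}(I(\DD'))$ and then invoke \Cref{thm:bettiKoszul}. Your treatment of the case $x_p \notin F$ is correct and is, in substance, the paper's manipulation of multiplying or dividing the witness monomial by $x_p$, using that a generator of $J$ requires $x_p$-degree at least $w$, a generator of $J'$ requires at least $w-1$, and the generators of $I(\DD\setminus x_p)$ are free of $x_p$.

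In the case $x_p \in F$, however, there is a genuine gap: the auxiliary claim that $F \in K_{\mathbf{b}}(I(\DD))$ if and only if $F\setminus\{x_p\} \in K_{\mathbf{b}}(I(\DD))$ is false. For instance, let $\DD$ be the single oriented edge $(x_1,x_2)$ with $w_2=w=2$, so $I(\DD)=(x_1x_2^2)$ and $\mathbf{b}=(1,2)$; then $\emptyset \in K_{\mathbf{b}}(I(\DD))$ because $x_1x_2^2 \in I(\DD)$, while $\{x_2\} \notin K_{\mathbf{b}}(I(\DD))$ because $x_1x_2 \notin I(\DD)$. This is exactly the asymmetry you flag (``membership of $F\setminus\{x_p\}$ can be forced by a $J$-generator'') but then leave unresolved, and as stated the equivalence you propose cannot be repaired. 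Fortunately it is also unnecessary: when $x_p \in F$, the $x_p$-degrees of $x^{\mathbf{b}}/x^F$ and $x^{\mathbf{b}'}/x^F$ are $w-1$ and $w-2$ respectively (both nonnegative since $w>1$), so no generator of $J$ divides the former and no generator of $J'$ divides the latter; hence membership in $I(\DD)$, respectively $I(\DD')$, is in both cases equivalent to divisibility by a generator of $I(\DD\setminus x_p)$, which involves no $x_p$ and is therefore decided by the part of the monomial away from $x_p$, identical for the two. This direct comparison is precisely how the paper closes its reverse containment: if the witness $m \notin J$, then $m \in I(\DD\setminus x_p)$, and since minimal generators of $I(\DD\setminus x_p)$ are not divisible by $x_p$, the monomial $m/x_p$ remains in $I(\DD\setminus x_p) \subseteq I(\DD')$. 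Replacing your false intermediate claim with this observation completes your proof and brings it in line with the paper's.
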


\begin{proof}
As in the proof of the previous lemma, it suffices to show $K_{\textbf{b}} (I(\DD)) = K_{\textbf{b}'} (I(\DD'))$ by \Cref{thm:bettiKoszul}. Let $F \in K_{\textbf{b}'} (I(\DD'))$. By the definition of upper-Koszul simplicial complexes, we have $$ m:= \frac{x^{\textbf{b}'}}{x^{F}}  = \frac{x_1^{\textbf{b}_1} \cdots x_p^{w-1}\cdots  x_n^{\textbf{b}_n}}{x^{F}} \in I(\DD').$$ 

If $m \in J'$, then there exists $xx_p^{w-1} \in J'$ such that $m$ is divisible by $xx_p^{w-1}$.  As a result, $mx_p \in J$ and 
$$ mx_p=  \frac{x_1^{\textbf{b}_1} \cdots x_p^{w}\cdots  x_n^{\textbf{b}_n}}{x^{F}} \in J \subseteq I(\DD)$$
which implies that $F \in K_{\textbf{b}} (I(\DD)).$ Suppose $m \notin J'.$ Then $m$ must be contained in $I(\DD \setminus x_p)$, thus $mx_p \in I(\DD \setminus x_p) \subseteq I(\DD)$ and $F \in K_{\textbf{b}} (I(\DD)).$
 
It remains to prove the reverse containment. If $F \in K_{\textbf{b}} (I(\DD))$, we have  
$$ m:  = \frac{x_1^{\textbf{b}_1} \cdots x_p^{w}\cdots  x_n^{\textbf{b}_n}}{x^{F}} \in I(\DD).$$ 

If $m\in J,$ then $m$ must be divisible by some $xx_p^w\in J$ which implies that $x_p \notin F$.  Since $w>1$, we have
$$ \frac{m}{x_p}  = \frac{x_1^{\textbf{b}_1} \cdots x_p^{w-1}\cdots  x_n^{\textbf{b}_n}}{x^{F}} \in J' \subseteq I(\DD').$$ 
Thus $F $ is a face in $K_{\textbf{b}'} (I(\DD')).$ Suppose $m \notin J$. Then $m$ must be contained in $ I(\DD \setminus x_p).$ Equivalently, there exists a minimal generator $e$ of $I(\DD \setminus x_p)$ such that $m =e \overline{m}$ for a monomial $\overline{m} \in R.$ Since $w>1$, the monomial $\overline{m}$ is divisible by $x_p$ while $e$ is not. Thus $m/x_p$ is still contained in $I(\DD \setminus x_p) \subseteq I(\DD')$ and $F \in K_{\textbf{b}'} (I(\DD')).$ Therefore,
$$K_{\textbf{b}} (I(\DD)) = K_{\textbf{b}'} (I(\DD')).$$

 \end{proof}
 
\begin{corollary}\label{cor:Betti}
Let $\DD$ be a weighted oriented graph with a non-trivial sink vertex $x_p$ and let $\DD'$ be the weight reduced form of $\DD$ on $x_p$. 
\begin{enumerate}
    \item[(a)] The $i^{\text{th}}$ total Betti numbers of $I(\DD)$ and $I(\DD')$ are equal for all $i\geq 0$.
    \item[(b)] We have 
    $$\beta_{i,j} (I(\DD)) = \beta_{i,j-1} ~(I(\DD')) -\beta_{i,j-1} ~(I(\DD\setminus x_p)) +\beta_{i,j} ~(I(\DD\setminus x_p))$$
    for all $i>0, j >1$.
\end{enumerate}
\end{corollary}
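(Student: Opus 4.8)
The plan is to derive both parts of the corollary directly from the multigraded relations established in Lemmas~\ref{lem:reduce1} and~\ref{lem:reduce2}, together with the vanishing remark preceding them and the Hochster-like formula of \Cref{thm:bettiKoszul}. The key observation is that all nonzero multigraded Betti numbers of the three ideals $I(\DD)$, $I(\DD')$, and $I(\DD\setminus x_p)$ are concentrated in multidegrees $\textbf{b}$ with $\textbf{b}_p\in\{0,w\}$, $\textbf{b}_p\in\{0,w-1\}$, and $\textbf{b}_p=0$ respectively, so that a finite case analysis on the $p$-th coordinate suffices.

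For part (a), I would fix $i\geq 0$ and write the $i$-th total Betti number as the sum of $\beta_{i,\textbf{b}}$ over all $\textbf{b}\in\N^n$. For $I(\DD)$, by the vanishing remark only multidegrees with $\textbf{b}_p=0$ or $\textbf{b}_p=w$ contribute; similarly for $I(\DD')$ only $\textbf{b}_p=0$ or $\textbf{b}_p=w-1$ contribute. Lemma~\ref{lem:reduce1} gives $\beta_{i,\textbf{b}}(I(\DD))=\beta_{i,\textbf{b}}(I(\DD'))$ for every $\textbf{b}$ with $\textbf{b}_p=0$. Lemma~\ref{lem:reduce2} sets up a bijection between multidegrees $\textbf{b}$ with $\textbf{b}_p=w$ and multidegrees $\textbf{b}'$ with $\textbf{b}'_p=w-1$ (agreeing in all other coordinates) under which $\beta_{i,\textbf{b}}(I(\DD))=\beta_{i,\textbf{b}'}(I(\DD'))$. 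Summing these two matched contributions yields $\sum_{\textbf{b}}\beta_{i,\textbf{b}}(I(\DD))=\sum_{\textbf{b}}\beta_{i,\textbf{b}}(I(\DD'))$, which is precisely the equality of total Betti numbers. (One should note $i=0$ is trivial since both ideals have the same number of minimal generators.)

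For part (b), I would pass to the $\ZZ$-graded Betti numbers, using $\beta_{i,j}(I)=\sum_{|\textbf{b}|=j}\beta_{i,\textbf{b}}(I)$ where $|\textbf{b}|$ denotes the total degree. Fix $i>0$ and $j>1$. Split the sum defining $\beta_{i,j}(I(\DD))$ according to whether $\textbf{b}_p=0$ or $\textbf{b}_p=w$. The $\textbf{b}_p=0$ part equals $\sum_{|\textbf{b}|=j,\,\textbf{b}_p=0}\beta_{i,\textbf{b}}(I(\DD\setminus x_p))=\beta_{i,j}(I(\DD\setminus x_p))$ by Lemma~\ref{lem:reduce1} (and again the vanishing remark, which forces $\textbf{b}_p=0$ for $I(\DD\setminus x_p)$). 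The $\textbf{b}_p=w$ part equals, via the degree-shifting bijection $\textbf{b}\mapsto\textbf{b}'$ of Lemma~\ref{lem:reduce2} (which drops total degree by exactly one), $\sum_{|\textbf{b}'|=j-1,\,\textbf{b}'_p=w-1}\beta_{i,\textbf{b}'}(I(\DD'))$. This last sum is $\beta_{i,j-1}(I(\DD'))$ minus the contribution from multidegrees with $\textbf{b}'_p=0$ and $|\textbf{b}'|=j-1$, which by Lemma~\ref{lem:reduce1} is exactly $\beta_{i,j-1}(I(\DD\setminus x_p))$. Assembling these pieces gives the stated formula.

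The main obstacle here is essentially bookkeeping rather than anything deep: one must be careful that the vanishing remark is invoked correctly for each of the three ideals (so that the only surviving $p$-coordinates are the claimed ones), and that the total-degree shift in Lemma~\ref{lem:reduce2} is tracked consistently when converting multigraded sums to $\ZZ$-graded ones. A minor subtlety worth flagging is the hypothesis $j>1$ (and $i>0$): it guarantees we are away from the degree where $\beta_{0,*}$ lives, so no exceptional low-degree terms interfere with the decomposition; since $x_p$ is a non-trivial sink, every generator of $J$ has degree $w+1\geq 3$, which keeps the relevant multidegrees in the expected range. Once the indexing is set up carefully, both identities drop out by direct summation.
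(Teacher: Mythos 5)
Your argument is correct and is essentially the paper's own proof: both split the multigraded Betti numbers by the value of the $p$-th coordinate, apply \Cref{lem:reduce1} and \Cref{lem:reduce2} to match contributions (with the degree shift $j\mapsto j-1$ in the $\textbf{b}_p=w$ case), and recover the graded formula by noting that the $\textbf{b}'_p=0$ part of $\beta_{i,j-1}(I(\DD'))$ equals $\beta_{i,j-1}(I(\DD\setminus x_p))$. No substantive difference from the paper's proof.
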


\begin{proof}
(a) It follows from \Cref{lem:reduce1} and \Cref{lem:reduce2} that
\begin{align*}
   \beta_{i} (I(\DD)) = \sum_{\textbf{b} \in \mathbb{N}^n} \beta_{i,\textbf{b}} (I(\DD))&= \sum_{\textbf{b} \in \mathbb{N}^n, \textbf{b}_p=0} \beta_{i,\textbf{b}} (I(\DD)) + \sum_{\textbf{b} \in \mathbb{N}^n, \textbf{b}_p=w} \beta_{i,\textbf{b}} (I(\DD)) \\
   &= \sum_{\textbf{b} \in \mathbb{N}^n, \textbf{b}_p=0} \beta_{i,\textbf{b}} (I(\DD')) + \sum_{\textbf{b} \in \mathbb{N}^n, \textbf{b}_p=w-1} \beta_{i,\textbf{b}} (I(\DD')) \\
   &= \sum_{\textbf{b} \in \mathbb{N}^n} \beta_{i,\textbf{b}} (I(\DD')) =   \beta_{i} (I(\DD')).
\end{align*}

(b) By making use of the equalities of multigraded Betti numbers obtained in \Cref{lem:reduce1} and \Cref{lem:reduce2}, one can express the Betti numbers of $I(\DD)$ in terms of those of $I(\DD')$ and $I(\DD\setminus x_p)$. In particular, we have
\begin{align*}
    \beta_{i,j} (I(\DD)) = \sum_{\textbf{b} \in \mathbb{N}^n, |\textbf{b}|=j} \beta_{i,\textbf{b}} ~(I(\DD))  
    &=  \sum_{\substack{|\textbf{b}|=j \\ \textbf{b}_p=w}} \beta_{i,\textbf{b}} ~(I(\DD))  +\sum_{\substack{|\textbf{b}|=j \\ \textbf{b}_p=0}} \beta_{i,\textbf{b}} ~(I(\DD)) \\
     &= \sum_{\substack{|\textbf{b}'|=j-1 \\ \textbf{b}'_p=w-1}} \beta_{i,\textbf{b}'} ~(I(\DD'))  +\sum_{\substack{|\textbf{b}|=j \\ \textbf{b}_p=0}} \beta_{i,\textbf{b}} ~(I(\DD\setminus x_p)) \\
        &=  \beta_{i,j-1} ~(I(\DD')) -\beta_{i,j-1} ~(I(\DD\setminus x_p)) +\beta_{i,j} ~(I(\DD\setminus x_p)) \\
\end{align*}
Note that $\DD \setminus x_p$ is an induced weighted oriented subgraph of $\DD'.$ Thus, by \Cref{lem:induced}, we have  $\beta_{i,j} ~(I(\DD')) \geq \beta_{i,j} ~(I(\DD\setminus x_p))$ for all $i,j$. 
\end{proof}

\begin{corollary}\label{cor:sinkReduction}
Let $\DD$ be a weighted oriented graph with a non-trivial sink vertex $x_p$ and let $\DD'$ be the weight reduced form of $\DD$ on $x_p$.  Then
\begin{enumerate}
    \item[(a)] $\pdim (R/I(\DD)) = \pdim (R/I(\DD')),$
    \item[(b)]$\reg (R/I(\DD)) = \reg (R/I(\DD')) +1.$
\end{enumerate}
\end{corollary}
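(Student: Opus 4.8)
The plan is to derive both statements directly from Corollary~\ref{cor:Betti}, which already packages the relationship between the multigraded Betti numbers of $I(\DD)$, $I(\DD')$, and $I(\DD\setminus x_p)$. For part (a), I would use the total-Betti-number identity $\beta_i(I(\DD)) = \beta_i(I(\DD'))$ for all $i\geq 0$ established in Corollary~\ref{cor:Betti}(a). Since $\pdim(R/I) = \pdim(I) + 1 = \max\{i : \beta_i(I)\neq 0\} + 1$ (as $\beta_i(R/I) = \beta_{i-1}(I)$ for $i\geq 1$), the equality of every total Betti number forces the top nonvanishing homological degree to coincide, hence $\pdim(R/I(\DD)) = \pdim(R/I(\DD'))$. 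This step is essentially immediate once Corollary~\ref{cor:Betti}(a) is in hand.

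For part (b), I would again work with $I(\DD)$ and $I(\DD')$ and translate back to $R/I$ at the end via $\beta_{i,j}(R/I) = \beta_{i-1,j}(I)$. Using Lemma~\ref{lem:reduce1} and Lemma~\ref{lem:reduce2}, every nonzero multigraded Betti number $\beta_{i,\textbf{b}}(I(\DD))$ with $\textbf{b}_p = 0$ equals the corresponding $\beta_{i,\textbf{b}}(I(\DD'))$ (same multidegree, so same $j = |\textbf{b}|$), while every nonzero $\beta_{i,\textbf{b}}(I(\DD))$ with $\textbf{b}_p = w$ equals $\beta_{i,\textbf{b}'}(I(\DD'))$ where $\textbf{b}'$ has $|\textbf{b}'| = |\textbf{b}| - 1$. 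Consequently, for each $i$, the set of degrees $j$ in which $\beta_{i,j}(I(\DD))$ is nonzero is obtained from the set of degrees in which $\beta_{i,j}(I(\DD'))$ is nonzero by: keeping the degrees coming from $\textbf{b}_p = 0$ unchanged, and shifting the degrees coming from $\textbf{b}_p = w-1$ up by one. The point is that a single sink vertex of weight $w$ contributes a nontrivial generator $x x_p^w$ to $I(\DD)$; in $\DD'$ these generators have weight $w-1$, so the ``reducible'' part of the resolution lives one degree lower.

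To finish, I would compute $\reg(R/I(\DD)) = \reg(I(\DD)) + 1 = \max\{j - i : \beta_{i,j}(I(\DD))\neq 0\} + 1$. By the discussion above, for fixed $i$ we have
$$\max\{\,j : \beta_{i,j}(I(\DD))\neq 0\,\} = \max\Big(\max\{\,j : \beta_{i,j}(I(\DD'))\neq 0,\ \text{from }\textbf{b}_p=0\,\},\ 1 + \max\{\,j : \beta_{i,j}(I(\DD'))\neq 0,\ \text{from }\textbf{b}_p=w-1\,\}\Big).$$
The key claim is that the maximum of $j-i$ over all nonzero Betti numbers of $I(\DD)$ is attained on a multidegree with $\textbf{b}_p = w$ (equivalently, $\textbf{b}_p \ne 0$): intuitively, reducing the weight of $x_p$ in a degree where $x_p$ actually appears strictly lowers $j$ while keeping $i$ fixed, so the regularity-maximizing degree must ``use'' $x_p$ at full weight. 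Granting this, the regularity of $I(\DD)$ equals the regularity of $I(\DD')$ plus one, and hence the same holds for $R/I(\DD)$ versus $R/I(\DD')$. I expect the main obstacle to be justifying precisely this claim — that the extremal $(i,j)$ for $I(\DD)$ never comes from the $\textbf{b}_p = 0$ stratum. The cleanest route is: any Betti number of $I(\DD)$ with $\textbf{b}_p = 0$ is, by Lemma~\ref{lem:reduce1}, a Betti number of $I(\DD\setminus x_p)$, which is an induced weighted oriented subgraph of $\DD'$, so by Lemma~\ref{lem:induced} it is bounded above (in the same $(i,j)$) by a Betti number of $I(\DD')$; meanwhile $I(\DD')$ has a Betti number in degree $(i, j-1)$ coming from the genuine generator $x x_p^{w-1}$, whose contribution to $\reg$ is one less than the matching contribution of $x x_p^w$ to $\reg(I(\DD))$. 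Assembling these comparisons shows $\reg(I(\DD)) \le \reg(I(\DD')) + 1$ and $\reg(I(\DD)) \ge \reg(I(\DD')) + 1$, giving equality. Replacing $I$ by $R/I$ shifts $i$ by one and does not affect $j - i$, completing part (b).
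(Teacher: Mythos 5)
Your part (a) is exactly the paper's argument: the equality of all total Betti numbers from \Cref{cor:Betti}(a) immediately forces $\pdim(R/I(\DD))=\pdim(R/I(\DD'))$, and nothing more is needed there. Your part (b) also follows the paper's skeleton (the multigraded Lemmas \ref{lem:reduce1} and \ref{lem:reduce2}, i.e.\ \Cref{cor:Betti}(b), combined with the induced-subgraph comparison), and the upper bound $\reg(R/I(\DD))\le \reg(R/I(\DD'))+1$ you give is fine.

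The problem is the lower bound, which you yourself flag as the main obstacle: your ``cleanest route'' does not prove it. The sentence asserting that ``$I(\DD')$ has a Betti number in degree $(i,j-1)$ coming from the genuine generator $x x_p^{w-1}$'' is precisely the claim at issue — you need an extremal Betti number of $I(\DD')$ (one with $j-i=\reg(I(\DD'))$) realized in a multidegree with $\textbf{b}_p=w-1$, since by \Cref{lem:reduce1} the $\textbf{b}_p=0$ stratum is just $I(\DD\setminus x_p)$ and contributes nothing new to $I(\DD)$ in higher degree. Comparing the $\textbf{b}_p=0$ stratum of $I(\DD)$ with $I(\DD')$ via \Cref{lem:induced}, as you propose, only reproves the upper bound; it manufactures no nonzero $\beta_{i,j}(I(\DD))$ with $j-i=\reg(I(\DD'))+1$. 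The paper assembles the same ingredients differently: from \Cref{cor:Betti}(b) it records the identity $\reg(R/I(\DD))=\max\{\reg(R/I(\DD'))+1,\ \reg(R/I(\DD\setminus x_p))\}$ and then collapses the maximum using $\reg(R/I(\DD'))\ge\reg(R/I(\DD\setminus x_p))$ from \Cref{cor:induced}. Note that the nontrivial half of that identity is your missing claim in disguise: in the recursion $\beta_{i,j}(I(\DD))=\beta_{i,j-1}(I(\DD'))-\beta_{i,j-1}(I(\DD\setminus x_p))+\beta_{i,j}(I(\DD\setminus x_p))$, taken at $j-1-i=\reg(I(\DD'))$, the last term vanishes for degree reasons, so nonvanishing of $\beta_{i,j}(I(\DD))$ requires $\beta_{i,j-1}(I(\DD'))>\beta_{i,j-1}(I(\DD\setminus x_p))$ at some extremal position, i.e.\ an extremal Betti number of $I(\DD')$ not coming from the deleted subgraph. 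To make your write-up complete you should either prove that statement directly or cite the displayed max-formula consequence of \Cref{cor:Betti}(b) together with \Cref{cor:induced}, as the paper does.
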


\begin{proof}
Equality of projective dimensions immediately follows from \Cref{cor:Betti} (a). By making use of \Cref{cor:Betti} (b), we have
$$\reg (R/I(\DD)) = \max \{ \reg (R/I(\DD'))+1, \reg (R/I(\DD\setminus x_p))\}.$$

Since $\reg (R/I(\DD') \geq \reg (R/I(\DD \setminus x_p)$ by \Cref{cor:induced}, we obtain the desired equality
$$\reg (R/I(\DD)) = \reg (R/I(\DD')) +1.$$

\end{proof}

\begin{remark}
Let $\DD$ be a weighted oriented graph with non-trivial weights. In general, values of the non-trivial weights are not necessarily equal. In the light of the above corollary, one can assign $w_i=1$ for all non-trivial sink weights as the base case and gradually obtain the multigraded Betti numbers for arbitrary values of the non-trivial sink weights.  
\end{remark}

\section{Algebraic Invariants via Upper-Koszul Simplicial Complexes}\label{ch3}

The main focus of this section is to obtain formulas for the projective dimension and regularity of edge ideals of weighted oriented graphs by exploiting the structure of related upper-Koszul simplicial complexes. Structure of an upper-Koszul simplicial complex heavily rely on the choice of a multidegree. An ``optimal" choice for a multidegree can be achieved by encoding the weights of all vertices in the multidegree. We use the word optimal to emphasize that this particular multidegree can lead us to a unique extremal Betti number which in turn enables us to compute the projective dimension and the regularity.

\begin{theorem}\cite[Hilbert's Syzygy Theorem]{hilbert1890ueber}
Every finitely generated graded module $M$ over the ring $R=k[x_1,\ldots, x_n]$ has a graded free resolution of length $\leq n.$ Hence $\pdim (M) \leq n$.
\end{theorem}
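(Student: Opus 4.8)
The plan is to reduce the statement to a $\Tor$-vanishing statement and then compute $\Tor$ against the residue field by means of the Koszul complex. First I would recall the homological algebra of standard graded rings: since $R_0 = k$ is a field, every finitely generated graded $R$-module $M$ admits a minimal graded free resolution $F_\bullet \to M$, unique up to isomorphism, whose $i$-th term has rank $\beta_i(M) := \sum_j \beta_{i,j}(M) = \dim_k \Tor_i^R(M,k)$, where $k = R/\mathfrak{m}$ with $\mathfrak{m} = (x_1,\ldots,x_n)$ (this uses graded Nakayama). In particular the length of this resolution — equivalently $\pdim(M)$ — equals $\max\{\, i : \Tor_i^R(M,k) \neq 0 \,\}$, so it suffices to prove that $\Tor_i^R(M,k) = 0$ for every $i > n$.

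Next I would invoke the Koszul complex $K_\bullet = K_\bullet(x_1,\ldots,x_n;R)$, the exterior-algebra complex on generators $e_1,\ldots,e_n$ placed in homological degree $1$ and internal degree $1$, with differential $e_{j_1}\wedge\cdots\wedge e_{j_s} \mapsto \sum_t (-1)^{t-1} x_{j_t}\, e_{j_1}\wedge\cdots\wedge \widehat{e_{j_t}} \wedge\cdots\wedge e_{j_s}$. The key input is that $x_1,\ldots,x_n$ is an $R$-regular sequence, because each $x_i$ is a nonzerodivisor on $R/(x_1,\ldots,x_{i-1})$, which is again a polynomial ring; hence $K_\bullet$ is acyclic in positive degrees and furnishes a graded free resolution of $k = R/\mathfrak{m}$ over $R$ of length exactly $n$. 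Computing $\Tor$ from this resolution, $\Tor_i^R(k,M)$ is the $i$-th homology of the complex $K_\bullet \otimes_R M$, which is zero in homological degrees $> n$; therefore $\Tor_i^R(k,M) = 0$ for $i > n$. By the symmetry of $\Tor$, $\Tor_i^R(M,k) = \Tor_i^R(k,M) = 0$ for $i > n$, which by the first paragraph yields $\pdim(M) \le n$ and a graded free resolution of $M$ of length $\le n$.

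I expect the only point demanding care — rather than a genuine obstacle — to be the acyclicity of the Koszul complex on $x_1,\ldots,x_n$, i.e. that $(x_1,\ldots,x_n)$ is $R$-regular. This is typically proved by induction on $n$, comparing $K_\bullet(x_1,\ldots,x_n;R)$ with $K_\bullet(x_1,\ldots,x_{n-1};R)$ via the mapping cone of multiplication by $x_n$ and using that $x_n$ acts as a nonzerodivisor on $R/(x_1,\ldots,x_{n-1}) \cong k[x_n]$. An alternative route, avoiding the Koszul complex, is a direct induction on $n$: writing $R = R'[x_n]$ with $R' = k[x_1,\ldots,x_{n-1}]$ and using that $R$ is free (hence flat) over $R'$ together with graded Nakayama, one shows that an $(n-1)$-st syzygy of $M$ over $R$ is free. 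I would keep the Koszul-complex argument as the primary proof, since it is the shortest and makes the bound $n$ completely transparent.
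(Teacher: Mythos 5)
Your argument is correct, and it is worth noting that the paper itself offers no proof of this statement: it is quoted as a classical result with a citation to Hilbert's 1890 paper, and it is used only as a black box (to get $\pdim(R/I(\DD))\leq n$ before \Cref{thm:completeMaxPdim}). What you have written is the standard modern homological proof: identify $\pdim(M)$ with $\max\{i : \Tor_i^R(M,k)\neq 0\}$ via minimal graded free resolutions and graded Nakayama, then kill $\Tor_i^R(M,k)$ for $i>n$ by computing it from the Koszul resolution of $k=R/\mathfrak{m}$, whose length is exactly $n$ because $x_1,\ldots,x_n$ is a regular sequence. All the steps you flag (existence and uniqueness of minimal resolutions for finitely generated graded modules, acyclicity of the Koszul complex, symmetry of $\Tor$) are correct and in the right logical order, and the finiteness hypothesis on $M$ is used exactly where it must be, namely in the Nakayama argument. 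This route is quite different in spirit from Hilbert's original elimination-theoretic induction on the number of variables (closer to your ``alternative route'' of showing an $(n-1)$-st syzygy is free over $R=R'[x_n]$); the Koszul/$\Tor$ proof buys a sharper statement essentially for free -- it shows that the \emph{minimal} resolution has length $\leq n$ and that the bound is governed by the residue field, which is precisely the form of the theorem the paper needs when it reads off $\pdim$ from nonvanishing multigraded Betti numbers $\beta_{n,\textbf{b}}(R/I(\DD))$ in \Cref{thm:completeMaxPdim}.
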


By Hilbert's Syzygy Theorem, $\pdim (R/I) \leq n$ for any homogeneous ideal $I\subseteq R.$ It is well-known that this bound is tight. A famous example for this instance is the graded maximal ideal $\mathfrak{m}=(x_1,\ldots, x_n)$ as the Koszul complex on the variables $x_1,\ldots, x_n$ gives a minimal free resolution of $R/\mathfrak{m}$ of length $n.$  In a recent paper \cite{alesandroni2020monomial}, the class of monomial ideals with the largest projective  dimension are characterized using dominant sets and divisibility conditions.

In the first result of this section, we characterize the structure of all weighted oriented graphs on $n$ vertices such that projective dimension of their edge ideals attain the largest possible value.

\begin{theorem}\label{thm:completeMaxPdim}
Let $\DD$ be a weighted oriented graph on the vertices $V(\DD) = \{x_1,\ldots, x_n\}$. Then
$$\pdim (R/ I(\DD))=n$$
if and only if there is an edge $e=(x_j,x_i)$ oriented towards $x_i$ for each $x_i \in V(\DD)$ such that $x_j$ has a non-trivial weight.

Furthermore, 
$$\pdim (R/ I(\DD))=n \text{ if and only if } \beta_{n,\textbf{b}} (R/I(\DD)) \neq 0$$ where $\textbf{b} = (w_1,\ldots, w_n) \in \mathbb{N}^n.$
\end{theorem}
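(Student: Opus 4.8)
The plan is to route the whole statement through the Hochster-type formula of \Cref{thm:bettiKoszul}. Since Hilbert's Syzygy Theorem gives $\pdim(R/I(\DD))\le n$, we have $\pdim(R/I(\DD))=n$ precisely when $\beta_{n,\textbf{b}}(R/I(\DD))\neq 0$ for some $\textbf{b}\in\mathbb{N}^n$, and $\beta_{n,\textbf{b}}(R/I(\DD))=\beta_{n-1,\textbf{b}}(I(\DD))=\dim_k\widetilde{H}_{n-2}(K_{\textbf{b}}(I(\DD));k)$. The first ingredient is the elementary topological fact that a simplicial complex with vertices among $\{x_1,\dots,x_n\}$ has $\widetilde{H}_{n-2}\neq 0$ if and only if it equals the boundary of the $(n-1)$-simplex on $\{x_1,\dots,x_n\}$, i.e. it contains every proper subset of $\{x_1,\dots,x_n\}$ but not $\{x_1,\dots,x_n\}$ itself: once the top face is excluded, the $(n-2)$-cycles of such a complex are exactly its $(n-2)$-chains lying in the one-dimensional cycle space of the full $(n-2)$-skeleton, whose generator is supported on all $n$ facets, so a nonzero cycle exists iff all $n$ facets are present. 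Translating $K_{\textbf{b}}(I(\DD))=$ this boundary complex through \eqref{eq:upperKoszul} and setting $\textbf{c}=\textbf{b}-(1,\dots,1)$, the condition $\beta_{n,\textbf{b}}(R/I(\DD))\neq 0$ becomes: $\textbf{b}\ge(1,\dots,1)$, $x^{\textbf{c}}\notin I(\DD)$, and $x_ix^{\textbf{c}}\in I(\DD)$ for every $i$. So the theorem reduces to deciding when such a $\textbf{c}\in\mathbb{N}^n$ exists, and in particular whether $\textbf{c}=(w_1-1,\dots,w_n-1)$ always works.

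Next I would carry out the computation for this distinguished exponent vector $\textbf{c}=(w_1-1,\dots,w_n-1)$. First, $x^{\textbf{c}}\notin I(\DD)$ always, since no generator $x_jx_k^{w_k}$ divides $x^{\textbf{c}}$: the $x_k$-exponent of $x^{\textbf{c}}$ is $w_k-1<w_k$. Second, $x_ix^{\textbf{c}}\in I(\DD)$ if and only if $x_i$ has a nontrivial in-neighbor: since $x_ix^{\textbf{c}}=x_i^{w_i}\prod_{k\neq i}x_k^{w_k-1}$, a generator can divide it only if it is $x_jx_i^{w_i}$ for an edge $(x_j,x_i)\in E(\DD)$ (for any other target $k$ the required $x_k$-exponent $w_k$ exceeds $w_k-1$), and such a generator does divide $x_ix^{\textbf{c}}$ exactly when its $x_j$-exponent $w_j-1$ is at least $1$, i.e. $w_j\ge 2$. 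Combining these two facts, $\beta_{n,\textbf{b}}(R/I(\DD))\neq 0$ for $\textbf{b}=(w_1,\dots,w_n)$ if and only if every vertex of $\DD$ has a nontrivial in-neighbor; this already gives the ``if'' direction of the first assertion and one implication of the ``furthermore''.

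For the reverse direction I would argue contrapositively: assuming some vertex $x_p$ has no nontrivial in-neighbor, show that no $\textbf{c}\in\mathbb{N}^n$ satisfies $x^{\textbf{c}}\notin I(\DD)$ together with $x_ix^{\textbf{c}}\in I(\DD)$ for all $i$, whence $\beta_{n,\textbf{b}}(R/I(\DD))=0$ for every $\textbf{b}$ and $\pdim(R/I(\DD))<n$. The key local observation is: if $x^{\textbf{c}}\notin I(\DD)$ but $x_ix^{\textbf{c}}\in I(\DD)$, then a generator witnessing membership divides $x_ix^{\textbf{c}}$ but not $x^{\textbf{c}}$, which forces its $x_i$-exponent to equal $c_i+1$ and hence $x_i$ to be an endpoint of its underlying edge; a short case check then yields that either (i) $x_i$ is the target of an edge $(x_j,x_i)$ with $c_j\ge 1$ and $c_i=w_i-1$, or (ii) $x_i$ is the source of an edge $(x_i,x_k)$ with $c_k\ge w_k$ and $c_i=0$. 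Applying this at $i=p$: in case (ii), applying it again at $i=k$ is impossible, since $c_k\ge w_k\ge 1$ is incompatible with both $c_k=w_k-1$ and $c_k=0$; in case (i), the in-neighbor $x_j$ of $x_p$ must be trivial, so $w_j=1$, and applying the observation at $i=j$ forces $c_j\le w_j-1=0$, contradicting $c_j\ge 1$. Every case leads to a contradiction, so no such $\textbf{c}$ exists.

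Finally, the ``furthermore'' assembles from the pieces: $\beta_{n,\textbf{b}}(R/I(\DD))\neq 0$ at $\textbf{b}=(w_1,\dots,w_n)$ trivially forces $\pdim(R/I(\DD))=n$, and conversely $\pdim(R/I(\DD))=n$ implies, via the contrapositive of the third paragraph, that every vertex has a nontrivial in-neighbor, which by the second paragraph gives $\beta_{n,\textbf{b}}(R/I(\DD))\neq 0$ at $\textbf{b}=(w_1,\dots,w_n)$. I expect the main obstacle to be exactly the case analysis of the third paragraph: pinning down which minimal generator of $I(\DD)$ can ``consume'' the extra variable $x_i$, and then chasing the resulting inequalities on $\textbf{c}$ from $x_p$ to an adjacent vertex; the topological reduction and the computation with the distinguished multidegree are routine once that is in hand.
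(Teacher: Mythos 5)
Your proposal is correct and takes essentially the same route as the paper: both reduce $\beta_{n,\mathbf{a}}(R/I(\DD))\neq 0$ via the upper-Koszul/Hochster-type formula of \Cref{thm:bettiKoszul} to the statement that the full vertex set is a minimal non-face of $K_{\mathbf{a}}(I(\DD))$, and then analyze which minimal generators can divide the monomials $x^{\mathbf{a}}/x^{F}$ and $x^{\mathbf{a}}/x^{F_i}$, landing on the multidegree $(w_1,\ldots,w_n)$ and the nontrivial in-neighbor condition. Your contrapositive case analysis for the ``only if'' direction is spelled out more explicitly than in the paper, which instead constrains the admissible multidegrees to $a_i\in\{0,1,w_i\}$ via \Cref{rem:GradedBetti}, but the underlying argument is the same.
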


\begin{proof}
Observe that  $\pdim (R/ I(\DD))=n$ if and only if  there exists a multidegree $\textbf{a} \in \mathbb{N}^n$ such that $\beta_{n,\textbf{a}} (R/I(\DD))$ is non-zero. It follows from  \Cref{thm:bettiKoszul} that 
$$\beta_{n,\textbf{a}} (R/I(\DD)) = \dim_k \widetilde{H}_{n-2} (K_{\textbf{a}} (I(\DD) ;k)) \neq 0$$
which happens only when $F= \{x_1,\ldots, x_n\}$ is a minimal non-face of $K_{\textbf{a}} (I(\DD))$. Note that each $a_i \in\{0, 1,w_i\}$. Otherwise, $\beta_{n,\textbf{a}} (R/I(\DD))=0$ by \Cref{rem:GradedBetti}.

Let $\D =K_{\textbf{a}} (I(\DD))$ and $F_i: = F \setminus \{x_i\}$ for each $x_i \in F$. Recall that $F$ is a minimal non-face of $\D$ whenever $F \notin \D$ and each $F_i \in \D$. It follows from the definition of upper-Koszul simplicial complexes  that 
\begin{align*}
  F \notin \D \iff  m:= \frac{x^{\textbf{a}}}{x^{F}} & =  \prod_{j=1}^n x_j^{a_j-1}  \notin I(\DD), \text{ and }  \\
F_i \in \D \iff  m_i := \frac{x^{\textbf{a}}}{x^{F_i}} & =  x_i^{a_i} \Big( \prod_{j\neq i} x_j^{a_j-1} \Big) \in I(\DD)  \text{ for each } i \in \{1,\ldots, n\}.
\end{align*}
Note that each  $a_i \neq 0$ when $m\notin I(\DD)$ and each $m_i \in I(\DD)$. The monomial $m_i \in I(\DD)$ if and only if there exists a minimal generator of $I(\DD)$ associated to an edge oriented towards $x_i$, say $e=(x_k,x_i)$, such that $x_kx_i^{w_i} \in I(\DD)$ divides $m_i$ for some $x_k \in V(\DD)$. This implies that $a_i=w_i$ and $a_k=w_k> 1$. Therefore, $\pdim (R/ I(\DD))=n$ if and only if,  for each $x_i \in V(\DD)$, there exists an edge $e=(x_k,x_i)$ oriented towards $x_i$ such that $w_k >1$. Notice that there are no source vertices when $\pdim (R/ I(\DD))=n$. The latter statement follows from the conclusion that $a_i=w_i$ for each $i=1,\ldots, n$  whenever  $ \beta_{n,\textbf{a}} (R/I(\DD)) \neq 0.$
\end{proof}

Note that $\beta_{n,\textbf{b}} (R/I(\DD)$ is the unique extremal Betti number of $R/I(\DD)$. Using this information, we can further deduce the formula for the regularity of $R/I(\DD)$.

\begin{corollary}
Let $\DD$ be a weighted oriented graph on the vertices $V(\DD) = \{x_1,\ldots, x_n\}$. Then
$$\pdim (R/ I(\DD))=n \text{ if and only if } \reg (R/ I(\DD))= \sum_{i=1}^n w_i -n.$$
\end{corollary}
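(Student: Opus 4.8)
The plan is to combine the previous theorem with the observation, already recorded in the remark following its proof, that when $\pdim(R/I(\DD)) = n$ the Betti number $\beta_{n,\textbf{b}}(R/I(\DD))$ with $\textbf{b} = (w_1,\ldots,w_n)$ is the \emph{unique} extremal Betti number. Recall that for a unique extremal Betti number $\beta_{p,p+r}(R/I) \neq 0$ one has $p = \pdim(R/I)$ and $r = \reg(R/I)$; this is exactly the characterization of uniqueness of extremal Betti numbers quoted in \Cref{ch1}. So the whole argument reduces to reading off the internal degree of that one Betti number.

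Concretely, I would argue as follows. First, assume $\pdim(R/I(\DD)) = n$. By \Cref{thm:completeMaxPdim} we have $\beta_{n,\textbf{b}}(R/I(\DD)) \neq 0$ with $\textbf{b} = (w_1,\ldots,w_n)$, and moreover $\beta_{n,\textbf{a}}(R/I(\DD)) = 0$ for every other multidegree $\textbf{a}$ (the proof of \Cref{thm:completeMaxPdim} shows $a_i = w_i$ for all $i$ is forced whenever $\beta_{n,\textbf{a}} \neq 0$). Since homological degree $n$ is the largest possible and $|\textbf{b}| = \sum_{i=1}^n w_i$ is then the unique internal degree in which a homological-degree-$n$ Betti number is nonzero, $\beta_{n,\textbf{b}}$ sits in the upper-left corner of a block of zeros in the Betti table, i.e.\ it is extremal; uniqueness follows because it is the only nonzero Betti number in homological degree $n$. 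Hence $p = \pdim(R/I(\DD)) = n$ and, writing this extremal Betti number as $\beta_{p,p+r}$ with $p+r = |\textbf{b}| = \sum_{i=1}^n w_i$, we get $r = \reg(R/I(\DD)) = \sum_{i=1}^n w_i - n$.

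For the converse, suppose $\reg(R/I(\DD)) = \sum_{i=1}^n w_i - n$. Here I would invoke \Cref{cor:induced} together with the sink-reduction machinery, or more directly a degree bound: each minimal generator of $I(\DD)$ has degree $1 + w_j \le 1 + w_{\max}$, and in general $\reg(R/I(\DD))$ is bounded above by $\sum_{i=1}^n w_i - n$ with equality detecting the extremal situation — I expect the cleanest route is to show that $\reg(R/I(\DD)) = \sum_{i=1}^n w_i - n$ can only happen if there is a nonzero Betti number in multidegree $(w_1,\ldots,w_n)$ at homological spot $n$, which by \Cref{thm:completeMaxPdim} forces $\pdim(R/I(\DD)) = n$. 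This is the step I expect to be the main obstacle: the forward direction is essentially immediate from the uniqueness of the extremal Betti number, but the converse requires ruling out that the regularity value $\sum w_i - n$ could be achieved by some other Betti number $\beta_{i,\textbf{a}}$ with $i < n$ and $|\textbf{a}| - i = \sum w_i - n$. To handle this I would use \Cref{rem:GradedBetti} to restrict attention to multidegrees $\textbf{a}$ that are lcm's of minimal generators (so each $a_j \in \{0,1,w_j\}$), bound $|\textbf{a}| - i$ over such $\textbf{a}$ by analyzing how many coordinates can equal $w_j$ versus how large $i$ must be to support a nonvanishing reduced homology of $K_{\textbf{a}}(I(\DD))$, and conclude that the maximum $\sum w_i - n$ is attained only at the full multidegree with $i = n$.
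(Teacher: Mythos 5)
Your forward direction is essentially the paper's own argument: the paper proves this corollary by the single observation that $\reg(R/I(\DD))=\sum_{i=1}^n w_i-n$ holds if and only if $\beta_{n,\mathbf{b}}(R/I(\DD))\neq 0$ for $\mathbf{b}=(w_1,\ldots,w_n)$, combined with \Cref{thm:completeMaxPdim}, and your reading of the regularity off the corner Betti number via \Cref{rem:extremalBetti} is the same route. One caveat even there: the inference ``it is the only nonzero Betti number in homological degree $n$, hence the unique extremal Betti number'' is too quick. Extremality at $(n,\sum w_i)$ only excludes nonzero Betti numbers to its lower right; it does not exclude extremal Betti numbers $\beta_{k,l}$ with $k<n$ and $l-k>\sum w_i-n$, and ruling those out is exactly what is needed to conclude $\reg(R/I(\DD))\le\sum w_i-n$. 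Neither your sketch nor the paper's \Cref{rem:extremalBetti} supplies that argument.

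The more serious problem is the converse, which you correctly identified as the main obstacle: the strategy you propose cannot close it, because its key claim is false. It is not true that $j-i=\sum w_i-n$ can only be attained at the full multidegree $\mathbf{b}$ with $i=n$. Take the naturally oriented path on $x_1,\ldots,x_4$ with weights $(1,5,1,2)$, so $I(\DD)=(x_1x_2^5,\,x_2x_3,\,x_3x_4^2)$ and $\sum w_i-n=5$. The syzygy between the first and third generators is a consequence of the two ``adjacent'' syzygies, and those two generate a free module, so the minimal free resolution is $0\to R(-7)\oplus R(-4)\to R(-6)\oplus R(-2)\oplus R(-3)\to R\to R/I(\DD)\to 0$. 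Hence $\reg(R/I(\DD))=5=\sum w_i-n$, attained at $\beta_{2,7}$, while $\pdim(R/I(\DD))=2$ and $\beta_{4,\mathbf{b}}(R/I(\DD))=0$ (the upper-Koszul complex at $\mathbf{b}=(1,5,1,2)$ is a filled triangle with a whisker, hence contractible). So the regularity value $\sum w_i-n$ can be achieved at a multidegree different from $\mathbf{b}$ and in homological degree strictly below $n$; your proposed bound on $|\mathbf{a}|-i$ over lcm multidegrees cannot work, since nonvanishing homology only forces $i\le|\mathrm{supp}(\mathbf{a})|$, not $i\ge|\mathrm{supp}(\mathbf{a})|$. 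Note that this example satisfies the corollary's hypotheses, so it actually contradicts the ``if'' direction as stated; the paper's one-sentence ``observe'' glosses over the same point. The robust statement is the one in \Cref{thm:completeMaxPdim}, namely $\pdim(R/I(\DD))=n\iff\beta_{n,\mathbf{b}}(R/I(\DD))\neq 0$, with $\reg(R/I(\DD))=\sum w_i-n$ being (at best) a consequence of that condition rather than a characterization of it.
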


\begin{proof}
Observe that $\reg (R/ I(\DD))= \sum_{i=1}^n w_i -n$ if and only if $\beta_{n,\textbf{b}} (R/I(\DD)) \neq 0$ where $\textbf{b} = (w_1,\ldots, w_n) \in \mathbb{N}^n.$
\end{proof}

\begin{remark}\label{rem:extremalBetti}
Let $\DD$ be a weighted oriented graph on $n$ vertices and let $\textbf{b}= (w_1, \ldots, w_n ) \in \mathbb{N}^n$ be a multidegree  corresponding to the least common multiple of all minimal generators of $I(\DD)$. 
If $\beta_{p,\textbf{b} }(R/I(\DD) ) $ is non-zero  where $p =\pdim (R/I(\DD)$, then $\beta_{p,|\textbf{b}| }(R/I(\DD))$ is the unique extremal Betti number of $R/I(\DD)$. Because  $\beta_{i,\textbf{a} }(R/I(\DD) ) = 0$ for  $\textbf{a}= (a_1, \ldots, a_n ) \in \mathbb{N}^n$ such that $a_i > w_i$ for some $i=1,\ldots, n$ by  \Cref{rem:GradedBetti}. 
\end{remark}

\subsection{Weighted Oriented Cycles}

Let $\mathcal{C}_n$ denote a weighted oriented cycle on $n$ vertices $x_1,\ldots, x_n$. We shall assume that there exists at least one vertex $x_i$ such that $w_i >1$ and there is an edge oriented into $x_i$. Otherwise, $\mathcal{C}_n$ can be considered as an unweighted, unoriented cycle whose Betti numbers are computed  in \cite{jacques2004betti}.

If $\mathcal{C}_n$ has at least one sink vertex, using \Cref{cor:Betti} (b), we can express the Betti numbers of $I(C_n)$ recursively in terms of Betti numbers of a weighted oriented path on $(n-1)$ vertices and a weight reduced form of $\mathcal{C}_n$. In the existence of one sink vertex, without loss of generality, we may assume that $x_n$ is a sink by reordering vertices of $\mathcal{C}_n$.

\begin{corollary}
If $x_n$ is a sink in $\mathcal{C}_n$ such that $w_n>1,$ then
    $$\beta_{i,j} (I(\mathcal{C}_n)) = \beta_{i,j-1} ~(I(\mathcal{C}'_n)) -\beta_{i,j-1} ~(I(\mathcal{P}_{n-1})) +\beta_{i,j} ~(I(\mathcal{P}_{n-1}))$$
    for all $i>0,j>1$ where $\mathcal{C}'_n$ is a weight reduced form of $\mathcal{C}_n$.
\end{corollary}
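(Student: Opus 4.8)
The plan is to recognize this corollary as a direct instance of \Cref{cor:Betti}(b). First I would verify that the hypotheses of \Cref{not:decrease} are met: since $x_n$ is a sink in $\mathcal{C}_n$ with $w_n > 1$, it is a non-trivial sink vertex, and its set of in-neighbors $N^-_{\mathcal{C}_n}(x_n)$ coincides with its full neighbor set $N_{\mathcal{C}_n}(x_n)$, which is nonempty (indeed, in a cycle every vertex has exactly two neighbors, and for the sink $x_n$ both edges point into it). Thus \Cref{not:decrease} applies with $p = n$ and $w = w_n$.

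The key observation is then simply identifying the three relevant graphs. The graph $\mathcal{D}' = \mathcal{C}_n'$ is by definition the weight reduced form of $\mathcal{C}_n$ on $x_n$. The induced subgraph $\mathcal{D} \setminus x_p = \mathcal{C}_n \setminus x_n$ is obtained by deleting the sink vertex $x_n$ together with its two incident edges; what remains is the path on the vertices $x_1, x_2, \ldots, x_{n-1}$ (with the orientations and weights inherited from $\mathcal{C}_n$), i.e.\ $\mathcal{C}_n \setminus x_n = \mathcal{P}_{n-1}$. Substituting these identifications into the formula of \Cref{cor:Betti}(b),
\[
\beta_{i,j}(I(\mathcal{D})) = \beta_{i,j-1}(I(\mathcal{D}')) - \beta_{i,j-1}(I(\mathcal{D}\setminus x_p)) + \beta_{i,j}(I(\mathcal{D}\setminus x_p)),
\]
yields exactly the asserted recursion for all $i>0$, $j>1$.

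Since nothing here requires new argument beyond applying an already-established result, there is no real obstacle; the only point deserving a sentence of care is the claim that deleting a sink vertex from a cycle leaves a path on the remaining vertices, which is immediate from the structure of $\mathcal{C}_n$. A short proof would therefore read: ``Since $x_n$ is a non-trivial sink of $\mathcal{C}_n$, \Cref{not:decrease} applies with $p = n$, $w = w_n$. The induced weighted oriented subgraph $\mathcal{C}_n \setminus x_n$ is the weighted oriented path $\mathcal{P}_{n-1}$ on $x_1, \ldots, x_{n-1}$, and $\mathcal{C}_n'$ is by definition the weight reduced form of $\mathcal{C}_n$ on $x_n$. The claim now follows immediately from \Cref{cor:Betti}(b)."
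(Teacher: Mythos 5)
Your proposal is correct and follows exactly the paper's reasoning: the corollary is presented there as an immediate application of \Cref{cor:Betti}(b), identifying $\mathcal{C}_n\setminus x_n$ with the weighted oriented path $\mathcal{P}_{n-1}$ and $\mathcal{C}'_n$ with the weight reduced form on the sink $x_n$. Your added remarks verifying the hypotheses of \Cref{not:decrease} and that deleting the sink from a cycle leaves a path are exactly the right (and only) points of care.
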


If $\mathcal{C}_n$ has no sink vertices (or source vertices), then  $\mathcal{C}_n$ must be endowed with the  natural orientation (clockwise or counter clockwise).  If all vertices of a naturally oriented weighted cycle $\mathcal{C}_n$ have non-trivial weights, its projective dimension and regularity can be computed by \Cref{thm:completeMaxPdim} and we recover one of the main results of \cite{zhu2019projective}.

\begin{corollary}\cite[Theorem 1.4.]{zhu2019projective}
Let $\mathcal{C}_n$ be naturally oriented weighted cycle where $w_i>1$ for all $i$. Then
$$\pdim (R/I(\mathcal{C}_n))=n \text{ and } \reg (R/I(\mathcal{C}_n))= \sum_{i=1}^n w_i -n.$$
\end{corollary}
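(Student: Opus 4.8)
The statement to prove is the corollary recovering \cite[Theorem 1.4]{zhu2019projective}: for a naturally oriented weighted cycle $\mathcal{C}_n$ with $w_i > 1$ for all $i$, we have $\pdim(R/I(\mathcal{C}_n)) = n$ and $\reg(R/I(\mathcal{C}_n)) = \sum_{i=1}^n w_i - n$.

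The plan is to apply \Cref{thm:completeMaxPdim} and its corollary directly. First I would recall that a naturally oriented cycle has edge set $E(\mathcal{C}_n) = \{(x_1,x_2), (x_2,x_3), \ldots, (x_{n-1},x_n), (x_n,x_1)\}$, so every vertex $x_i$ is the target of exactly one edge, namely the edge coming from $x_{i-1}$ (indices mod $n$). Since $w_i > 1$ for all $i$, in particular the source vertex $x_{i-1}$ of this incoming edge has non-trivial weight $w_{i-1} > 1$. This is exactly the combinatorial condition in the first part of \Cref{thm:completeMaxPdim}: for each $x_i \in V(\mathcal{C}_n)$ there is an edge $e = (x_j, x_i)$ oriented toward $x_i$ with $x_j$ of non-trivial weight. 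Hence $\pdim(R/I(\mathcal{C}_n)) = n$.

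Next I would invoke the corollary following \Cref{thm:completeMaxPdim}, which states that $\pdim(R/I(\DD)) = n$ if and only if $\reg(R/I(\DD)) = \sum_{i=1}^n w_i - n$. Since we have just established $\pdim(R/I(\mathcal{C}_n)) = n$, the regularity formula follows immediately. Equivalently, one can note from the ``furthermore'' part of \Cref{thm:completeMaxPdim} that $\beta_{n,\textbf{b}}(R/I(\mathcal{C}_n)) \neq 0$ for $\textbf{b} = (w_1,\ldots,w_n)$, and by \Cref{rem:extremalBetti} this is the unique extremal Betti number, occurring in homological degree $n$ and internal degree $|\textbf{b}| = \sum w_i$, which gives regularity $|\textbf{b}| - n = \sum_{i=1}^n w_i - n$.

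There is essentially no obstacle here: the entire content is checking that the naturally oriented cycle with all non-source weights non-trivial satisfies the hypothesis of \Cref{thm:completeMaxPdim}, which is a one-line combinatorial observation once the edge set is written down. The only mild subtlety worth a sentence is confirming that $\mathcal{C}_n$ has no source vertices (each vertex has one incoming and one outgoing edge), so the blanket convention ``$w_i = 1$ for source vertices'' does not conflict with the assumption $w_i > 1$ for all $i$, and the multidegree $\textbf{b} = (w_1,\ldots,w_n)$ genuinely equals the least common multiple of the minimal generators $x_{i-1} x_i^{w_i}$ of $I(\mathcal{C}_n)$.
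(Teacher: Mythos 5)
Your proposal is correct and follows the same route as the paper: the paper also deduces this corollary by observing that a naturally oriented cycle with all weights non-trivial satisfies the hypothesis of \Cref{thm:completeMaxPdim} (every vertex has an incoming edge from a non-trivial vertex), and then reads off the regularity from the resulting unique extremal Betti number $\beta_{n,\textbf{b}}$ with $\textbf{b}=(w_1,\ldots,w_n)$. Your added remarks about the absence of source vertices and $\textbf{b}$ being the lcm multidegree are consistent with the paper's conventions and require no changes.
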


\subsection{Weighted  Rooted Graphs} In this subsection, we consider the possibility of having source vertices and their effects on algebraic invariants. In the existence of several source vertices, one needs more information on the structure of a weighted oriented graph to be able to compute its algebraic invariants. As a natural starting point, we consider weighted rooted graphs.

\begin{definition}
 A weighted graph $\DD$ is called \emph{rooted} if there is a vertex distinguished as the root and there is a naturally oriented path (i.e., all edges on the path are in the same direction) from the root vertex to any other vertex in $\DD$. Orientation of $\DD$ is determined by  naturally oriented paths from the root to other vertices. Note that the only source vertex of a weighted  rooted graph is its root.
\end{definition}

\begin{remark}
If $\DD$ is a weighted rooted graph, it does not fit in the description of weighted oriented graphs whose edge ideals have the largest projective dimension in \Cref{thm:completeMaxPdim}. Thus $\pdim (R/ I(\DD)) \leq n-1.$
\end{remark}

\begin{theorem}\label{thm:rootedGraph}
Let $\DD$ be a weighted  rooted graph on the vertices $\{x_1,\ldots, x_n\}$ with the root vertex $x_1.$ Suppose $w_i \geq 2$ for all $i\neq 1.$ Then
$$\pdim (R/I(\DD))=n-1 \text{ and } \reg (R/I(\DD))= \sum_{i=1}^n w_i -n+1.$$
\end{theorem}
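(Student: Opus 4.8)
The plan is to reduce to the maximal-projective-dimension case via the sink reduction corollary, \Cref{cor:sinkReduction}, after suitably modifying $\DD$. The key observation is that a weighted rooted graph is just one source vertex away from satisfying the hypothesis of \Cref{thm:completeMaxPdim}: every vertex $x_i$ with $i \neq 1$ has an incoming edge $(x_j, x_i)$, but the source $x_1$ has no incoming edge and any in-neighbor $x_j$ of a leaf-level vertex could have trivial weight only if $x_j = x_1$. I would first argue that the only obstruction to $\pdim(R/I(\DD)) = n$ is precisely the vertex $x_1$; concretely, I would introduce an auxiliary weighted oriented graph $\widetilde{\DD}$ on vertices $\{x_0, x_1, \ldots, x_n\}$ obtained by adding a new vertex $x_0$ with an edge $(x_0, x_1)$ and assigning $x_0$ a non-trivial weight, say $w_0 = 2$ (and keeping all other weights, which are $\geq 2$ for $i \neq 1$). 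Then $\widetilde{\DD}$ satisfies the characterization in \Cref{thm:completeMaxPdim} on $n+1$ vertices, so $\pdim(\widetilde{R}/I(\widetilde{\DD})) = n+1$ where $\widetilde{R} = k[x_0, \ldots, x_n]$, and its unique extremal Betti number sits in multidegree $(2, w_1, w_2, \ldots, w_n)$.

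Next I would relate $I(\widetilde{\DD})$ back to $I(\DD)$. Since $x_0$ appears in exactly one generator $x_0 x_1^{w_1} = x_0 x_1$ (as $w_1 = 1$), one can try a Betti splitting of $I(\widetilde{\DD}) = (x_0 x_1) + I(\DD)$ via \Cref{thm:bettiSplit}: the ideal $(x_0 x_1)$ is principal hence has a linear resolution, so this is a Betti splitting, giving
\begin{equation*}
\beta_{i,j}(\widetilde{R}/I(\widetilde{\DD})) = \beta_{i,j}(\widetilde{R}/(x_0x_1)) + \beta_{i,j}(\widetilde{R}/I(\DD)) + \beta_{i-1,j}(\widetilde{R}/((x_0x_1) \cap I(\DD)))
\end{equation*}
for $i, j > 1$. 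I would then analyze the intersection ideal $(x_0 x_1) \cap I(\DD) = x_0 \cdot (x_1 \cap I(\DD)) $, observe that $x_1 \cdot I(\DD : x_1)$ has a resolution built from that of a subgraph, and track the top homological degree. The upshot I expect is that $\pdim(\widetilde{R}/I(\widetilde\DD)) = n+1$ forces $\pdim(R/I(\DD)) = n-1$ and that the extremal Betti number in multidegree $(w_1, \ldots, w_n)$ is nonzero, whence by \Cref{rem:extremalBetti} it is the unique extremal Betti number; the regularity formula $\reg(R/I(\DD)) = |\textbf{b}| - p = \sum w_i - (n-1)$ then follows exactly as in the corollary after \Cref{thm:completeMaxPdim}.

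An alternative, and perhaps cleaner, route avoids the auxiliary vertex: work directly with the upper-Koszul complex $K_{\textbf{b}}(I(\DD))$ at $\textbf{b} = (w_1, \ldots, w_n)$ and show $\beta_{n-1, \textbf{b}}(R/I(\DD)) \neq 0$ by exhibiting a nonzero class in $\widetilde{H}_{n-3}(K_{\textbf{b}}(I(\DD)); k)$. Following the computation in the proof of \Cref{thm:completeMaxPdim}, a subset $F \subseteq \{x_1, \ldots, x_n\}$ lies in $K_{\textbf{b}}(I(\DD))$ iff $x^{\textbf{b}}/x^F \in I(\DD)$, and one checks that $F = \{x_1, \ldots, x_n\} \setminus \{x_1\}$ is not a face (removing $x_1$ leaves $x^{\textbf{b}}/x^F = x_1^{w_1-1} \prod_{j \neq 1} x_j^{w_j - 1} = \prod_{j\neq 1} x_j^{w_j-1}$, and since $w_1 = 1$ no generator $x_i x_j^{w_j}$ with $j \neq 1$ divides it because the $x_1$-exponent available is $0 < 1$... ) — more precisely one shows the full vertex set minus $x_1$ behaves like a minimal non-face of the subcomplex on $\{x_2,\ldots,x_n\}$, yielding a generator of $\widetilde{H}_{n-3}$. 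One must verify this class is not killed, i.e. that $K_{\textbf{b}}(I(\DD))$ restricted appropriately has the homotopy type of a sphere $S^{n-3}$; here the rooted structure (each $x_i$, $i \neq 1$, has a unique predecessor on its path to the root, all of non-trivial weight) is what makes the relevant links contractible or spherical.

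\textbf{Main obstacle.} The hard part will be controlling the homology of $K_{\textbf{b}}(I(\DD))$ — or equivalently, in the splitting approach, pinning down the resolution of the intersection ideal $(x_0 x_1) \cap I(\DD)$ and showing its contribution lands in exactly the right homological degree without introducing higher Betti numbers that would destroy the uniqueness of the extremal Betti number. Unlike the complete-graph or full-projective-dimension situations, here we cannot simply invoke that the top simplex is a minimal non-face; the root $x_1$ genuinely lowers the projective dimension by one, so we need the finer statement that $\beta_{n,\ast} = 0$ while $\beta_{n-1,\textbf{b}} \neq 0$, and proving the vanishing $\beta_{n,\ast}(R/I(\DD)) = 0$ amounts to showing $\{x_1, \ldots, x_n\}$ is never a minimal non-face of any $K_{\textbf{a}}(I(\DD))$, which is exactly the content of the ``only if'' direction of \Cref{thm:completeMaxPdim} applied to $\DD$ (no valid incoming edge at $x_1$). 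So the vanishing is free; the real work is the non-vanishing together with uniqueness, for which I would lean on \Cref{rem:extremalBetti} once $\beta_{n-1,\textbf{b}} \neq 0$ is established, reducing everything to that single computation.
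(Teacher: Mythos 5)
Your primary route collapses at its first step: the auxiliary graph $\widetilde{\DD}$ does \emph{not} satisfy the characterization of \Cref{thm:completeMaxPdim}. That theorem requires \emph{every} vertex to have an incoming edge from a non-trivially weighted vertex, and your new vertex $x_0$ is a source with no incoming edge at all; moreover, if you keep $w_1=1$, any child of $x_1$ whose only in-neighbor is $x_1$ also violates the condition. So $\pdim(\widetilde{R}/I(\widetilde{\DD}))\leq n$, never $n+1$ (your own Betti splitting confirms this: the three terms have projective dimension at most $2$, at most $n$, and at most $n$ respectively, since $(x_0x_1)\cap I(\DD)=x_0x_1\,(I(\DD):x_1)$ and $I(\DD):x_1$ involves only $x_2,\ldots,x_n$). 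Nothing can be extracted from that construction.

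Your ``alternative route'' is in fact the paper's proof, but you leave it unexecuted exactly where the hypotheses do the work. The paper takes $\textbf{b}=(1,w_2,\ldots,w_n)$ and shows $F=\{x_2,\ldots,x_n\}$ is a minimal non-face of $K_{\textbf{b}}(I(\DD))$: (i) $F\notin K_{\textbf{b}}(I(\DD))$ because $x^{\textbf{b}}/x^F=x_1\prod_{j\geq 2}x_j^{w_j-1}$ (note your computation divides by $x_1$ even though $x_1\notin F$, and your stated reason about the ``$x_1$-exponent being $0$'' is off) and every generator of $I(\DD)$ is divisible by some $x_j^{w_j}$ with $j\geq 2$; and (ii) each facet $F_i=F\setminus\{x_i\}$ \emph{is} a face, because $x^{\textbf{b}}/x^{F_i}=x_1x_i^{w_i}\prod_{j\neq 1,i}x_j^{w_j-1}$ is divisible by $x_kx_i^{w_i}\in I(\DD)$ for an in-neighbor $x_k$ of $x_i$ (rootedness gives such a $k$; if $k=1$ the exponent $1$ of $x_1$ suffices, and if $k\neq 1$ then $w_k-1\geq 1$ by the hypothesis $w_k\geq 2$). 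Step (ii), which is where both the rooted structure and the weight hypothesis enter, is entirely missing from your write-up, and the non-vanishing of the resulting class in $\widetilde{H}_{n-3}$ is left as an unresolved ``must verify.'' What you do get right, and what matches the paper, is the frame around this computation: $\pdim(R/I(\DD))\leq n-1$ comes for free from the ``only if'' direction of \Cref{thm:completeMaxPdim} (a rooted graph has a source), and once $\beta_{n-1,\textbf{b}}\neq 0$ is established, \Cref{rem:extremalBetti} gives the unique extremal Betti number and hence both formulas. As it stands, though, the central non-vanishing computation is not done, so the proof is incomplete.
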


\begin{proof} Let $\textbf{b}=(1,w_2,\ldots, w_n) \in \mathbb{N}^n.$ It suffices to show that $\beta_{n-1, |\textbf{b}|} (R/I(\DD))$ is the unique extremal Betti number of $R/I(\DD).$

  Consider the upper-Koszul complex $K_{\textbf{b}} (I(\DD))$ of $I(\DD)$. Let $F=\{x_2,\ldots, x_n\}$ and $F_i= F \setminus \{x_i\}$ for $2\leq i\leq n.$ Our goal is to show that $F$ is a minimal non-face of $K_{\textbf{b}} (I(\DD))$. First observe  that $ F \notin K_{\textbf{b}} (I(\DD)).$ Otherwise, by the definition of  upper-Koszul simplicial complexes, we must have $x_1\prod_{i=2}^n x_i^{w_i-1} \in I(\DD)$ which is not possible because each generator of $I(\DD)$ must be divisible by $x_i^{w_i}$ for some $i\in\{2,\ldots, n\}$. 

 If each $F_i \in K_{\textbf{b}} (I(\DD))$, then $F$ must be a minimal non-face.  Note that 
 $$m_i:= \frac{\prod_{j=1}^n x_j^{w_j}}{ x_2 \cdots \hat{x_i} \cdots x_n} = x_1 \Big( \prod_{i=1, j\neq i}^n x_j^{w_j-1} \Big) x_i^{w_i} \in I(\DD)$$
 because all non-source vertices have non-trivial weights and $m_i$ is divisible by $x_1x_i^{w_i} \in I(\DD)$ or $x_jx_i^{w_i} \in I(\DD)$ for some $i\in\{2,\ldots, n\}.$ It implies that $F_i \in K_{\textbf{b}} (I(\DD))$  for each $i$. Thus  $F$ is an $(n-2)$-dimensional minimal non-face of $K_{\textbf{b}} (I(\DD))$. It follows from \Cref{thm:bettiKoszul} that 
$$\beta_{n-1,\textbf{b}} (R/I(\DD))= \dim_k \widetilde{H}_{n-3} (K_{\textbf{b}} (I(\DD));k)) \neq 0.$$
 
 Therefore, $\beta_{n-1,|\textbf{b}|} (R/I(\DD))$ is the unique extremal Betti number of $R/I(\DD)$  by \Cref{thm:completeMaxPdim} and \Cref{rem:extremalBetti}.
\end{proof}

As an immediate consequence of \Cref{thm:rootedGraph}, we recover several results from \cite{zhu2019projective}. 

\begin{corollary}\cite[Theorem 1.2. and Theorem 1.3.]{zhu2019projective}\label{cor:weightedForest}
Let $\mathcal{D}$ be weighted rooted forest with non-trivial weights. Then
$$\pdim (R/I(\DD))=n-1 \text{ and } \reg (R/I(\DD))= \sum_{i=1}^n w_i -n+1.$$
\end{corollary}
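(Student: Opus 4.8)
The plan is to derive Corollary~\ref{cor:weightedForest} directly from Theorem~\ref{thm:rootedGraph} by reducing the statement about a weighted rooted forest to the already-treated case of a weighted rooted graph. First I would observe that a weighted rooted forest $\DD$ is a disjoint union of weighted rooted trees $\DD_1,\ldots,\DD_t$, each on a disjoint set of variables, with roots $x_{i_1},\ldots,x_{i_t}$ respectively. Since the polynomial ring splits as a tensor product and $I(\DD)=I(\DD_1)+\cdots+I(\DD_t)$ with the summands living in disjoint sets of variables, the minimal free resolution of $R/I(\DD)$ is the tensor product of the minimal free resolutions of the $R_s/I(\DD_s)$; consequently $\pdim$ and $\reg$ are additive:
\begin{align*}
\pdim (R/I(\DD)) &= \sum_{s=1}^{t} \pdim (R_s/I(\DD_s)), \\
\reg (R/I(\DD)) &= \sum_{s=1}^{t} \reg (R_s/I(\DD_s)).
\end{align*}
Each weighted rooted tree $\DD_s$ is in particular a weighted rooted graph on its vertex set, and by hypothesis all non-root vertices carry non-trivial weight, so Theorem~\ref{thm:rootedGraph} applies verbatim to each $\DD_s$.

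Next I would plug in the formulas from Theorem~\ref{thm:rootedGraph}: if $\DD_s$ has $n_s$ vertices with weights $w^{(s)}_1=1$ at the root and $w^{(s)}_k\geq 2$ otherwise, then $\pdim (R_s/I(\DD_s))=n_s-1$ and $\reg (R_s/I(\DD_s))=\sum_k w^{(s)}_k - n_s + 1$. Summing over $s$ and using $\sum_s n_s = n$ gives
$$\pdim (R/I(\DD)) = \sum_{s=1}^t (n_s-1) = n - t \quad\text{and}\quad \reg (R/I(\DD)) = \sum_{i=1}^n w_i - n + t.$$
This recovers the stated formula precisely when $t=1$, i.e.\ when the forest is connected. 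For a genuine forest with $t>1$ components the corollary as literally stated would need $n-1$ and $\sum w_i - n + 1$; so I would either (i) interpret ``rooted forest'' in the sense of \cite{zhu2019projective} as a rooted tree (the more likely reading, since a forest with a single distinguished root and oriented paths to all vertices is necessarily connected), in which case $t=1$ and the proof is complete, or (ii) state the general $t$-component formula and note it specializes as above. The safest route is to follow the convention that makes the root-to-every-vertex path condition force connectivity, so $t=1$.

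The main obstacle — really the only subtlety — is justifying the additivity of $\pdim$ and $\reg$ over the connected components, i.e.\ the tensor-product decomposition of the resolution when the ideals live in disjoint variable sets. This is standard (it follows from the K\"unneth formula for $\Tor$, or equivalently from the fact that the tensor product of two minimal free resolutions is again minimal and resolves the tensor product of the modules), but it must be invoked cleanly; alternatively one can cite it as a known fact about monomial ideals on disjoint supports. Once that is in hand, everything else is a one-line substitution into Theorem~\ref{thm:rootedGraph}, so I expect the write-up to be very short.
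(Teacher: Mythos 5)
Your argument is correct and is essentially the paper's own route: the paper gives no written proof of \Cref{cor:weightedForest}, presenting it as an immediate consequence of \Cref{thm:rootedGraph}, which is exactly your final step. What you add is the reduction over connected components via the standard tensor-product (K\"unneth) additivity of $\pdim$ and $\reg$ for monomial ideals in disjoint variable sets, and your bookkeeping there is right and worth noting: a forest consisting of $t$ rooted trees yields $\pdim(R/I(\DD))=n-t$ and $\reg(R/I(\DD))=\sum_{i=1}^n w_i-n+t$, so the displayed formulas hold verbatim only when $t=1$. This is consistent with the paper's definition of a rooted graph, in which a single distinguished root with a naturally oriented path to every vertex forces connectivity, so under that convention the corollary is a one-line specialization of \Cref{thm:rootedGraph} and no additivity argument is needed; if instead one reads ``rooted forest'' as in \cite{zhu2019projective}, with one root per component, then your general $t$-component formulas are the correct statement and the corollary should be read componentwise (or per tree). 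Either way your proof is complete; just state explicitly which convention you are using.
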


\section{Weighted Oriented Complete Graphs}\label{ch4}

Let $\mathcal{K}_n$ denote a weighted oriented complete graph on $n$ vertices $\{x_1,\ldots, x_n\}$ for $n>1$ and let $I(\mathcal{K}_n)$ denote its edge ideal. Throughout this section, we may assume that there exists at least one vertex $x_p$ such that $w_p >1$. Otherwise, $\mathcal{K}_n$ is the unweighted, unoriented complete graph and  Betti numbers of its edge ideal are well-understood from its  independence complex due to Hochster's formula.

\begin{definition}\label{not:complete}
A weighted oriented complete graph $\mathcal{K}_n$ is called \emph{naturally oriented} if the oriented edge set is given by $\{(x_i ,x_j) : 1\leq i<j\leq n\}$. Then the edge ideal of $\mathcal{K}_n$ is
$$I(\mathcal{K}_n)= (x_ix_j^{w_j} : 1 \leq i < j \leq n).$$
Since $x_1$ is a source vertex, we set $w_1=1$.
\end{definition}

In what follows, we provide formulas for the projective dimension and the regularity for the edge ideal of a naturally oriented weighted complete graph. The  key ingredient of the proof is the use of upper-Koszul simplicial complex of $I(\mathcal{K}_n)$.

\begin{theorem}\label{thm:naturalWOC}
Let $\mathcal{K}_n$ be a naturally oriented weighted  complete graph such that $w_p>1$ for some $p \geq 2$. Then
\begin{enumerate}
     \item[(a)] $\pdim (R/I(\mathcal{K}_n))=n-1$ and
     \item[(b)] $\displaystyle \reg (R/I(\mathcal{K}_n))= \sum_{i=1}^n w_i -n+1.$
\end{enumerate}
\end{theorem}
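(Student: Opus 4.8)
The plan is to imitate the strategy of Theorem~\ref{thm:rootedGraph}: exhibit a multidegree $\textbf{b}$ at which $R/I(\mathcal{K}_n)$ has a nonzero Betti number in homological degree $n-1$, and argue that this forces $\beta_{n-1,|\textbf{b}|}(R/I(\mathcal{K}_n))$ to be the unique extremal Betti number, from which both (a) and (b) follow at once. The natural candidate is $\textbf{b}=(1,w_2,\ldots,w_n)\in\mathbb{N}^n$, the exponent vector of $\lcm$ of all minimal generators (recall $w_1=1$ since $x_1$ is a source). By the remark following Theorem~\ref{thm:rootedGraph} (\Cref{rem:extremalBetti}), once we know $\beta_{n-1,\textbf{b}}(R/I(\mathcal{K}_n))\neq 0$ and $\pdim(R/I(\mathcal{K}_n))\le n-1$, it is automatically the unique extremal Betti number, giving $\reg(R/I(\mathcal{K}_n))=|\textbf{b}|-(n-1)=\sum_{i=1}^n w_i - n+1$. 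The bound $\pdim\le n-1$ comes from \Cref{thm:completeMaxPdim}: a naturally oriented $\mathcal{K}_n$ has the source $x_1$, so there is no edge oriented into $x_1$ from a non-trivial vertex, hence $\pdim(R/I(\mathcal{K}_n))<n$.

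So the heart of the matter is to show $\beta_{n-1,\textbf{b}}(R/I(\mathcal{K}_n))\neq 0$, equivalently (by \Cref{thm:bettiKoszul}) that $\widetilde H_{n-3}(K_{\textbf{b}}(I(\mathcal{K}_n));k)\neq 0$. I would first check that $F=\{x_2,\ldots,x_n\}\notin K_{\textbf{b}}(I(\mathcal{K}_n))$: indeed $x^{\textbf{b}}/x^F = x_1\prod_{i=2}^n x_i^{w_i-1}$, and this is not in $I(\mathcal{K}_n)$ because every generator is divisible by some $x_j^{w_j}$ with $j\ge 2$, which this monomial is not. Then I would like to conclude, as in Theorem~\ref{thm:rootedGraph}, that each $F_i=F\setminus\{x_i\}$ lies in the complex: $x^{\textbf{b}}/x^{F_i}=x_1 x_i^{w_i}\prod_{j\ge 2,\,j\ne i}x_j^{w_j-1}$, which is divisible by $x_1 x_i^{w_i}\in I(\mathcal{K}_n)$ (this generator exists precisely because $1<i$ in the natural orientation). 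This shows $F$ is an $(n-2)$-dimensional minimal non-face.

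The subtlety — and the main obstacle — is that, unlike in the rooted-graph case where $F$ being a minimal non-face of dimension $n-2$ sits inside a complex on the vertex set $\{x_2,\ldots,x_n\}$ and forces a nonzero reduced homology in degree $n-3$ by a boundary-of-simplex argument, here one must rule out that $K_{\textbf{b}}(I(\mathcal{K}_n))$ contains other faces that kill the cycle. Concretely, I expect that $K_{\textbf{b}}(I(\mathcal{K}_n))$, restricted to the relevant vertices, is exactly the boundary complex $\partial\langle x_2,\ldots,x_n\rangle$ together with possibly the vertex $x_1$ attached in a way that forms a cone (whence contributes nothing to homology) — but verifying this requires describing which subsets $G\subseteq\{x_1,\ldots,x_n\}$ satisfy $x^{\textbf{b}}/x^G\in I(\mathcal{K}_n)$. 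The key computation: $x^{\textbf{b}}/x^G\in I(\mathcal{K}_n)$ iff there is a pair $i<j$ with $x_i x_j^{w_j}\mid x^{\textbf{b}}/x^G$; since $\textbf{b}$ already has exponent $w_j$ in slot $j$ and $1$ in slot $1$, this translates into a clean combinatorial condition on $G$ (roughly: $G$ must omit $x_j$ for the "target'' $j$ of some usable edge, and not over-remove the "source''). I would carry out this translation, identify $K_{\textbf{b}}(I(\mathcal{K}_n))$ with the (possibly coned) boundary of the simplex on $\{x_2,\ldots,x_n\}$, and read off $\widetilde H_{n-3}=k\neq 0$. Finally, invoking \Cref{rem:extremalBetti} closes out both (a) and (b) simultaneously.
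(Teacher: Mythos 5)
Your overall strategy coincides with the paper's: take the upper-Koszul complex at the lcm multidegree $\textbf{b}=(1,w_2,\ldots,w_n)$, show $\beta_{n-1,\textbf{b}}(R/I(\mathcal{K}_n))\neq 0$ via \Cref{thm:bettiKoszul}, bound $\pdim(R/I(\mathcal{K}_n))\leq n-1$ by observing that $\mathcal{K}_n$ has the source $x_1$ and so falls outside \Cref{thm:completeMaxPdim}, and then read off (a) and (b) from the unique extremal Betti number via \Cref{rem:extremalBetti}. Those outer steps are fine. The gap is exactly where you say the heart of the matter lies: you exhibit the minimal non-face $F=\{x_2,\ldots,x_n\}$, correctly note that this alone does not give $\widetilde H_{n-3}\neq 0$, but then leave the verification as a plan, and the heuristic you propose for closing it is not sound. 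If the faces containing $x_1$ did form a cone over the whole boundary $\partial\langle x_2,\ldots,x_n\rangle$, that cone would not ``contribute nothing'': the complex would then be $x_1\ast\partial\langle x_2,\ldots,x_n\rangle$, which is acyclic, and the class of $\partial F$ would die. So the survival of the cycle cannot be argued by ``the $x_1$-part is a cone''; it must be argued by showing the $x_1$-part is \emph{not} large enough to cone off the sphere.

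The missing step has a concrete, short fix. The only subsets of size $n-1$ whose boundaries involve the facet $\{x_3,\ldots,x_n\}$ of the cycle $\partial F$ are $F$ itself and $G=\{x_1,x_3,x_4,\ldots,x_n\}$, and neither lies in $K_{\textbf{b}}(I(\mathcal{K}_n))$: for $G$, the monomial $x^{\textbf{b}}/x^{G}$ has $x_1$-exponent $0$ and $x_j$-exponent $w_j-1<w_j$ for $j\geq 3$, so the only admissible target is $x_2$, whose only possible source $x_1$ has exponent $0$; hence $x^{\textbf{b}}/x^{G}\notin I(\mathcal{K}_n)$. Consequently no $(n-2)$-chain of the complex can have boundary equal to the nonzero cycle $\partial F$, so $\widetilde H_{n-3}(K_{\textbf{b}}(I(\mathcal{K}_n));k)\neq 0$, which is what you need. (Equivalently: the link of $x_1$ omits the facet $\{x_3,\ldots,x_n\}$, so the star of $x_1$ does not swallow the boundary sphere.) For comparison, the paper's proof handles this point with a different choice of candidate non-face: it takes $F=\{x_1,\ldots,x_n\}$, checks that neither $F$ nor $F\setminus\{x_1\}$ is a face while every $F\setminus\{x_i,x_j\}$ is, and deduces the nonvanishing of $\widetilde H_{n-3}$ from the complex containing the full codimension-two skeleton while missing top-dimensional faces. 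Either route works, but your write-up as it stands asserts rather than proves the homological nonvanishing, and the one justification it sketches would, if taken literally, prove the opposite.
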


\begin{proof}
Let $\textbf{b}=(w_1,w_2,\ldots, w_n) \in \mathbb{N}^n$. We claim that $\beta_{n-1, \textbf{b}} (R/I(\mathcal{K}_n))$ is the unique extremal Betti  number of $R/I(\mathcal{K}_n)$. As an immediate consequence of the claim, we obtain the expressions given in the statement of the theorem as the values of the projective dimension and  the regularity of $R/I(\mathcal{K}_n)$.

In order to prove the claim, consider the upper Koszul simplicial complex of $I(\mathcal{K}_n)$ in multidegree $\textbf{b}$ and denote it by $\Delta:=K_{\textbf{b}} (I(\mathcal{K}_n))$. For the first part of the claim, it suffices to show that the only minimal non-faces of $\Delta$ are $(n-2)$-dimensional.

Let  $F=\{x_1,\ldots, x_n\}$ and $F_1 = F \setminus \{x_1\}$.  Since $x^{\textbf{b}-F } = \prod_{i=2}^n x_i^{w_i-1}$  and $x^{\textbf{b}-F_1 } = x_1\prod_{i=2}^n x_i^{w_i-1}$  are not contained in $I(\mathcal{K}_n)$,  neither $F$ nor $F_1$ is a face in $\Delta$.    In addition, let $F_{i,j} : = F \setminus \{x_i,x_j\}$ for each $1\leq i<j \leq n$. Observe that each $F_{i,j}$ is a face in $ \Delta$  because   $$\displaystyle \frac{x^{\textbf{b}}}{ x^{F_{i,j}}} = \Big( \prod_{k\neq i,j} x_k^{w_k-1}\Big) x_i^{w_i} x_j^{w_j} \in I(\mathcal{K}_n).$$
Therefore, the upper Koszul simplicial complex $\Delta$ has at least one minimal non-face of dimension $(n-2)$. Then, we have $\dim_k \widetilde{H}_{n-3} (\Delta ;k)) \neq 0.$ Hence, by \Cref{thm:bettiKoszul},
$$\beta_{n-1,\textbf{b}} (R/I(\mathcal{K}_n))= \beta_{n-2,\textbf{b}} (I(\mathcal{K}_n))= \dim_k \widetilde{H}_{n-3} (\Delta ;k)) \neq 0.$$
 Note that $\mathcal{K}_n$ does not belong to the class of graphs expressed in \Cref{thm:completeMaxPdim} and it follows that $\pdim (R/I(\mathcal{K}_n)) \leq n-1$. Thus, $\beta_{n-1,|\textbf{b}|} (R/I(\mathcal{K}_n))$
is the unique extremal Betti number of $R/I(\mathcal{K}_n)$ by \Cref{rem:extremalBetti}. 

\end{proof}

\begin{remark}
Recall that, when $w_i \geq 2$ for all $x_i \neq x_1,$ the edge ideal $I(\mathcal{K}_n)$ is the initial ideal of the vanishing ideal of a projective nested cartesian set. As mentioned in the introduction, $\reg (R/I(\mathcal{K}_n))$ is a strict upper bound for the degree of the evaluation map used in creating projective nested cartesian codes.  It was shown in \cite{carvalho2017projective} that degree of the evaluation map must be less than $\sum_{i=1}^n w_i -n+1$ for a projective nested cartesian code to have an ``optimal" minimum distance (\cite[Theorem 3.8]{carvalho2017projective}). However, this upper bound is not obtained by computing $\reg (R/I(\mathcal{K}_n))$ explicitly in \cite{carvalho2017projective} and the equality $\reg (R/I(\mathcal{K}_n)) = \sum_{i=1}^n w_i -n+1$ is rather concluded from their results (see \cite[Proposition 6.3]{martinez2017minimum}). In a way, \Cref{thm:naturalWOC} part (b) recovers this embedded conclusion.
\end{remark}

Above theorem completes the discussion of regularity and projective dimension of $I(\mathcal{K}_n)$ when $\mathcal{K}_n$ is naturally oriented. If the orientation of a weighted complete graph is not known, finding regularity and projective dimension through upper-Koszul simplicial complexes becomes a more difficult task. In the absence of an explicit orientation, structure of the upper-Koszul simplicial complex of $I(\mathcal{K}_n)$ is more  complex. Thus, one needs to employ different techniques than upper-Koszul simplicial complexes.

In the following result, we provide a recursive formula for the  Betti numbers of $I(\mathcal{K}_n)$ in the existence of a sink vertex. This condition  can be considered as a local property of $\mathcal{K}_n$.  Note that if there exists at least one sink vertex, by relabeling the vertices, we may assume that $x_n$ is a sink.

\begin{theorem}\label{thm:completeSink}
If $x_n$ is a sink in $\mathcal{K}_n$, we have
$$ \beta_{i,j} (R/I(\mathcal{K}_n)) =
\begin{cases}
\beta_{i,j} (R/I(\mathcal{K}_{n-1})) +{{n-1}\choose{i}}  + \beta_{i-1,j-w_n} (R/ I(\mathcal{K}_{n-1})) &: j=i+w_n\\\\
\beta_{i,j} (R/I(\mathcal{K}_{n-1})) + \beta_{i-1,j-w_n} (R/ I(\mathcal{K}_{n-1})) &: j\neq i+w_n
\end{cases}$$
for all $i>1.$

\end{theorem}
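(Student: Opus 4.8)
The plan is to realize $I(\mathcal{K}_n)$ as a Betti splitting with the pieces governed by the sink vertex $x_n$, and then to compute the Betti numbers of the intersection ideal explicitly. Since $x_n$ is a sink, every edge incident to $x_n$ is oriented into it, so the generators of $I(\mathcal{K}_n)$ divisible by $x_n$ are exactly $x_1 x_n^{w_n}, \ldots, x_{n-1} x_n^{w_n}$; write $K = (x_i x_n^{w_n} : 1 \le i \le n-1)$ for the ideal they generate and $J = I(\mathcal{K}_{n-1})$ for the ideal generated by the remaining generators (which involve only $x_1, \ldots, x_{n-1}$ and form the edge ideal of the induced weighted oriented complete graph on those vertices). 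Thus $I(\mathcal{K}_n) = J + K$ with $\mathcal{G}(J)$ and $\mathcal{G}(K)$ partitioning $\mathcal{G}(I(\mathcal{K}_n))$.

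First I would check the splitting hypothesis. The ideal $K = x_n^{w_n} \cdot (x_1, \ldots, x_{n-1})$ is, up to the shift by the monomial $x_n^{w_n}$, the ideal generated by $n-1$ variables, which has a linear resolution (the Koszul complex); multiplying all generators by the fixed monomial $x_n^{w_n}$ preserves linearity of the resolution. Wait — I should phrase this as: $K$ is generated in a single degree $w_n + 1$ and its minimal free resolution is the Koszul complex on $x_1, \ldots, x_{n-1}$ tensored with $R(-w_n)$, hence linear. By \Cref{thm:bettiSplit}, since $\mathcal{G}(K)$ consists of all generators divisible by the variable $x_n$ and $K$ has a linear resolution, $I(\mathcal{K}_n) = K + J$ is a Betti splitting. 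Therefore, by \Cref{def:bettiSplit},
$$\beta_{i,j}(R/I(\mathcal{K}_n)) = \beta_{i,j}(R/J) + \beta_{i,j}(R/K) + \beta_{i-1,j}(R/(J\cap K))$$
for all $i, j > 1$, and $\beta_{i,j}(R/J) = \beta_{i,j}(R/I(\mathcal{K}_{n-1}))$ is the first term in the claimed formula.

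Next I would compute $\beta_{i,j}(R/K)$ and $\beta_{i-1,j}(R/(J\cap K))$. For $K$: since its resolution is the Koszul complex on $n-1$ variables shifted by $x_n^{w_n}$, we get $\beta_{i,j}(R/K) = \binom{n-1}{i}$ when $j = i + w_n$ and $0$ otherwise; this contributes precisely the $\binom{n-1}{i}$ term appearing only in the case $j = i + w_n$. For the intersection: because $\mathcal{G}(J)$ involves only $x_1,\ldots,x_{n-1}$ and every generator of $K$ is $x_n^{w_n}$ times a variable $x_i$, one computes $J \cap K = x_n^{w_n} \cdot (J \cap (x_1,\ldots,x_{n-1})) = x_n^{w_n} \cdot J$ — here I use that $J \subseteq (x_1, \ldots, x_{n-1})$ since every generator of $J = I(\mathcal{K}_{n-1})$ is a nonconstant monomial in $x_1, \ldots, x_{n-1}$, so $J \cap (x_1,\ldots,x_{n-1}) = J$, and $\lcm$-compatibility of the monomial generators gives $J \cap K = x_n^{w_n} J$. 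Hence $R/(J\cap K)$ has the resolution of $R/J$ shifted by $x_n^{w_n}$ in internal degree, so $\beta_{i-1,j}(R/(J\cap K)) = \beta_{i-1,j-w_n}(R/J) = \beta_{i-1,j-w_n}(R/I(\mathcal{K}_{n-1}))$, which is the remaining term. Assembling the three contributions yields the stated case split.

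The main obstacle I anticipate is the precise identification $J \cap K = x_n^{w_n} J$: one must be careful that the intersection of two monomial ideals is computed on the level of least common multiples of pairs of generators, and argue that every generator of $J \cap K$ is of the form $\lcm(x_a x_b^{w_b},\, x_i x_n^{w_n}) = x_n^{w_n} \cdot \lcm(x_a x_b^{w_b}, x_i)$, and that the set of these, as $x_a x_b^{w_b}$ ranges over $\mathcal{G}(J)$ and $x_i$ over $\{x_1,\ldots,x_{n-1}\}$, generates the same ideal as $x_n^{w_n} J$ (the point being that $\lcm(x_a x_b^{w_b}, x_i)$ is a multiple of $x_a x_b^{w_b}$, so these generators all lie in $x_n^{w_n} J$, and conversely each generator $x_n^{w_n} x_a x_b^{w_b}$ of $x_n^{w_n} J$ is obtained by taking $x_i = x_a$). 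A secondary point requiring care is that \Cref{def:bettiSplit} only asserts the splitting formula for $i, j > 1$, which matches the range $i > 1$ in the statement; the degrees $j$ forced to be $\le 1$ do not occur since all generators have degree $\ge 2$, so no separate low-degree analysis is needed.
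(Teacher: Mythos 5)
Your proposal is correct and follows essentially the same route as the paper: the same decomposition $I(\mathcal{K}_n)=I(\mathcal{K}_{n-1})+(x_ix_n^{w_n}: 1\le i\le n-1)$, the same appeal to the linear (shifted Koszul) resolution of the $x_n$-part to invoke the Betti splitting criterion, and the same identification of the intersection as $x_n^{w_n}I(\mathcal{K}_{n-1})$ yielding the degree-shifted Betti numbers. The only difference is cosmetic: you spell out the lcm argument for the intersection, which the paper states without proof.
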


\begin{proof} Let $J= (x_ix_n^{w_n} : 1\leq i <n)$. Then one can decompose $I(\mathcal{K}_n)$ as a disjoint sum of $I(\mathcal{K}_n)=I(\mathcal{K}_{n-1})+J$ where $\mathcal{K}_{n-1}$ is a weighted oriented complete graph on $(n-1)$ vertices.  It is clear that $\mathcal{K}_{n-1}$ is an induced weighted oriented subgraph of $ \mathcal{K}_n$.  Note that the minimal free resolution of $J$ is obtained from shifting the minimal free resolution of $R/(x_1,\ldots, x_{n-1})$ by degree $w_n$. Thus $J$ has a linear resolution. It then follows from \Cref{thm:bettiSplit} that $I(\mathcal{K}_n)=I(\mathcal{K}_{n-1})+J$ is a Betti splitting because $\mathcal{G}(J)$ contains all the generators of $I(\mathcal{K}_n)$ divisible by $x_n$.  Then, by \Cref{def:bettiSplit},
\begin{align*}
\beta_{i,j} (R/I(\mathcal{K}_n)) &= \beta_{i,j} (R/I(\mathcal{K}_{n-1})) +\beta_{i,j} (R/J)+ \beta_{i-1,j} (R/ I(\mathcal{K}_{n-1}) \cap J)   
\end{align*}
for $i>1$.   Our goal is to analyze each term of the above expression.

It is immediate from the definition of $J$ that $\beta_{i,j+w_n} (R/J)= \beta_{i,j} (R/(x_1,\ldots, x_{n-1}))$ for $i>0.$ Recall that Koszul complex is a minimal free resolution of the $R$-module $R/(x_1,\ldots, x_{n-1})$ and the only non-zero Betti numbers occur when $j=i.$ In particular, for $1\leq i\leq n-1,$ we have
\begin{equation}\label{eq:gen1}
 \beta_{i,i} (R/(x_1,\ldots, x_{n-1})) = {{n-1}\choose{i}}= \beta_{i,i+w} (R/J)   
\end{equation}

as the only non-zero Betti numbers of $J$.

Next, observe that $I(\mathcal{K}_{n-1})\cap J = (x_n^{w_n}) I(\mathcal{K}_{n-1}).$ Then, we can express the Betti numbers of the intersection in terms of iterated Betti numbers of $I(\mathcal{K}_{n-1})$. More specifically, for all $i>0,$
\begin{equation}\label{eq:gen2}
\beta_{i,j+w} (R/I(\mathcal{K}_{n-1}) \cap J ) = \beta_{i,j} (R/I(\mathcal{K}_{n-1})).    
\end{equation}
Therefore, one can obtain the expressions given in the statement of the theorem by using \Cref{eq:gen1} and \Cref{eq:gen2}.
\end{proof}

\begin{corollary}\label{cor:completeSink}
If $x_n$ is a sink in $\mathcal{K}_n$, then
\begin{enumerate}
     \item[(a)] $\pdim (R/I(\mathcal{K}_n))\in  \{ n-1, n\}$ and
     \item[(b)] $ \reg (R/I(\mathcal{K}_n))= \reg (R/I(\mathcal{K}_{n-1}))+(w_n-1).$
\end{enumerate}
\end{corollary}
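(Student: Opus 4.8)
The plan is to read off both statements directly from the recursive Betti-number formula in \Cref{thm:completeSink}, by an induction on $n$. For part (b), I would argue that the recursion $\beta_{i,j}(R/I(\mathcal{K}_n)) = \beta_{i,j}(R/I(\mathcal{K}_{n-1})) + \binom{n-1}{i}[j=i+w_n] + \beta_{i-1,j-w_n}(R/I(\mathcal{K}_{n-1}))$ relates the regularity of $R/I(\mathcal{K}_n)$ to that of $R/I(\mathcal{K}_{n-1})$ by examining which $(i,j)$ maximize $j-i$ among the non-vanishing Betti numbers. The first summand contributes pairs with $j-i \le \reg(R/I(\mathcal{K}_{n-1}))$; the ``$\binom{n-1}{i}$'' term contributes pairs with $j-i = w_n$ (for $1\le i\le n-1$), hence value $w_n$; and the shifted term $\beta_{i-1,j-w_n}(R/I(\mathcal{K}_{n-1}))$ contributes a non-zero Betti number exactly when $\beta_{i-1,j-w_n}(R/I(\mathcal{K}_{n-1}))\neq 0$, i.e. at $(i,j)$ with $(j-w_n)-(i-1) \le \reg(R/I(\mathcal{K}_{n-1}))$, giving $j-i \le \reg(R/I(\mathcal{K}_{n-1})) + w_n - 1$, with equality attained. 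Since $\reg(R/I(\mathcal{K}_{n-1})) \ge 1$ (the ideal is non-zero and not principal once $n-1\ge 2$, and the base case $\mathcal{K}_2$ has $\reg = w_2 \ge 1$), the dominant term is the shifted one, so $\reg(R/I(\mathcal{K}_n)) = \reg(R/I(\mathcal{K}_{n-1})) + w_n - 1$.

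For part (a), I would similarly track the top homological degree. From the recursion, $\beta_{i,j}(R/I(\mathcal{K}_n))\neq 0$ forces either $\beta_{i,j}(R/I(\mathcal{K}_{n-1}))\neq 0$ (so $i\le \pdim(R/I(\mathcal{K}_{n-1})) \le n-1$ by Hilbert's Syzygy Theorem applied to the ring on $n-1$ variables), or $j=i+w_n$ with $1\le i\le n-1$, or $\beta_{i-1,j-w_n}(R/I(\mathcal{K}_{n-1}))\neq 0$ (so $i-1\le n-1$, i.e. $i\le n$). Hence $\pdim(R/I(\mathcal{K}_n))\le n$; and it is $\ge n-1$ because $I(\mathcal{K}_n)$ contains the generators $x_i x_n^{w_n}$ for $1\le i\le n-1$, so the ``$\binom{n-1}{i}$'' term gives $\beta_{n-1,(n-1)+w_n}(R/I(\mathcal{K}_n))\neq 0$ (using $i=n-1$), forcing $\pdim \ge n-1$. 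Therefore $\pdim(R/I(\mathcal{K}_n))\in\{n-1,n\}$.

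The only subtlety I anticipate is bookkeeping at the boundary values $i=1$ (the recursion in \Cref{thm:completeSink} is stated only for $i>1$, so I should handle $\beta_{0,\cdot}$ and $\beta_{1,\cdot}$ separately — these contribute $j-i$ at most $w_n$, which is dominated once $n\ge 3$) and verifying the base case $\mathcal{K}_2$ (where $I(\mathcal{K}_2)=(x_1x_2^{w_2})$, $\pdim(R/I(\mathcal{K}_2))=1$, $\reg(R/I(\mathcal{K}_2))=w_2-1$, consistent with part (b) since $\reg(R/I(\mathcal{K}_1))=0$ for the zero ideal). None of this is a real obstacle; the content is entirely in \Cref{thm:completeSink}, and the corollary is a matter of extracting $\max\{i\}$ and $\max\{j-i\}$ from the three explicit contributions.
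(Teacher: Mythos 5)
Your proposal is correct and takes essentially the same route as the paper's proof: both extract $\max\{i\}$ and $\max\{j-i\}$ from the three non-negative contributions in \Cref{thm:completeSink}, with the shifted term $\beta_{i-1,j-w_n}(R/I(\mathcal{K}_{n-1}))$ giving $\reg = r+w_n-1$ (using $r\ge 1$) and the Koszul-type term at $i=n-1$ together with $p+1\le n$ giving $\pdim\in\{n-1,n\}$. One small slip: the low homological degrees $i\le 1$ contribute $j-i\le\max_k w_k$, not necessarily $\le w_n$, but these are still dominated by $r+w_n-1$ since $r$ already bounds the generator degrees of $I(\mathcal{K}_{n-1})$ minus one, so the conclusion is unaffected.
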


\begin{proof}
Let $p=\pdim (R/ I(\mathcal{K}_{n-1}))$ and $ r=\reg (R/ I(\mathcal{K}_{n-1})))$.  Then $\beta_{p,p+r} (R/ I(\mathcal{K}_{n-1})) \neq 0$ and $\beta_{i,j} (R/ I(\mathcal{K}_{n-1})) =0$ for $i>p$ or $j>p+r.$ By using  \Cref{thm:completeSink}, we obtain the  following top non-zero Betti numbers of $R/I(\mathcal{K}_n)$.
\begin{align}
    \beta_{p+1,p+r+w_n} (R/I(\mathcal{K}_n)) & \neq 0 \label{eq:complete1}\\
    \beta_{n-1,n-1+w_n} (R/ I( \mathcal{K}_n)) & \neq 0 \label{eq:complete2}
\end{align}

Hence, \Cref{eq:complete1} and \Cref{eq:complete2} imply that
\begin{align*}
    \reg (R/I(\mathcal{K}_n))& = \max \{ w_n, r+w_n-1\} =r+w_n-1\\
    &= \reg (R/I(\mathcal{K}_{n-1}))+(w_n-1).
\end{align*}
Similarly, using the top non-zero Betti numbers, we have
$$    \pdim (R/I(\mathcal{K}_n)) = \max \{ n-1, p+1\}$$ 
Since $p \leq n-1,$ the projective dimension is either $n-1$ or $n.$

\end{proof}

\begin{remark}\label{rem:star}
If the underlying graph of a weighted oriented graph $\DD$ on $n$ vertices is a star, we call $\DD$ a weighted oriented star graph.  Let $x_n$ be the center of $\DD$. If $x_n$ is a sink vertex, we say $\DD$ is a weighted oriented star with a center sink. The edge ideal of a weighted oriented star with a center sink $x_n$ is given as
$$I(\DD) = (x_ix_n^{w_n} ~:~ 1\leq i\leq n-1).$$
As discussed in the proof of \Cref{thm:completeSink}, the module $R/I(\DD)$ has a linear resolution and it is obtained by shifting the Koszul complex of $R/(x_1,\ldots, x_{n-1})$ by degree $w_n$. Then $$\pdim (R/I(\DD)) =n-1 \text{ and} \reg (R/I(\DD)) =w_n.$$

\end{remark}

\section{Betti Numbers via Mapping Cone Construction}\label{ch5}

In this section, we provide a recursive formula for the Betti numbers of edge ideals of weighted oriented graphs with at least one leaf vertex which is also a sink. We achieve it by employing a technique called the \emph{mapping cone construction}. This technique is different than Betti splittings while being as powerful.  

Recall that Betti splitting is a method which allows one to express Betti numbers of an ideal in terms of smaller ideals. In a similar vein, mapping cone construction allows one to build a free resolution of an $R$-module $M$ in terms of $R$-modules associated to $M$. In particular, given a short exact sequence
$$0 \longrightarrow R/M' \longrightarrow R/M'' \longrightarrow R/M \longrightarrow 0$$
where $M', M''$ and $M$ are graded $R$-modules, the mapping cone construction provides a free resolution of $M$ in terms of free resolutions of $M'$ and $M''$. For more details on the mapping cone construction, we refer the reader to \cite{Peeva2011}. In general, given minimal free resolutions for $M'$ and $M''$, the mapping cone construction does not necessarily give a minimal free resolution of $M$. However, there are classes of ideals in which the mapping cone construction provides a minimal free resolution for particular short exact sequences (see \cite{bouchat2011path}).

Let $\DD$ be a weighted oriented graph with the vertex set $V(\DD)= \{x_1,\ldots, x_n\} $. A vertex is called a \emph{leaf} if there is only one edge incident to it. In the existence of at least one leaf vertex which is also a sink, one can use the mapping cone construction to describe Betti numbers of $R/I(\DD)$ recursively.  Note that there is no restriction on the overall orientation of $\DD$.

\begin{theorem}\label{thm:mappingCone}
Let $\DD$ be a weighted oriented graph on the vertices $x_1,\ldots, x_n$ with a leaf $x_n$. Suppose $x_n$ is a sink vertex. Then the mapping cone construction applied to the short exact sequence
$$0 \longrightarrow \frac{R}{I(\DD \setminus x_n ): (x_{n-1}x_n^{w_n})} (-w_n-1) \xrightarrow{x_{n-1}x_n^{w_n}} \frac{R}{I(\DD \setminus x_n )} \longrightarrow \frac{R}{I(\DD)} \longrightarrow 0$$

provides a minimal free resolution of $R/I(\DD)$. In particular, for any $i$ and $j$, we have

$$\beta_{i,j} (R/I(\DD)) = \beta_{i,j} (R/I(\DD \setminus x_n )) +\beta_{i-1,j-w-1} (R/I(\DD \setminus x_n ):x_{n-1})). $$
\end{theorem}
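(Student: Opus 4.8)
The plan is to verify that the displayed short exact sequence is genuine, then show the mapping cone construction applied to it yields a *minimal* free resolution, and finally read off the Betti number recursion. First I would justify the short exact sequence: since $x_n$ is a leaf whose only incident edge is $(x_{n-1},x_n)$ (it is a sink, so the edge points into $x_n$), the generator $x_{n-1}x_n^{w_n}$ is the unique minimal generator of $I(\DD)$ involving $x_n$, hence $I(\DD) = I(\DD\setminus x_n) + (x_{n-1}x_n^{w_n})$ with the generating sets disjoint. The standard colon sequence $0\to R/(I(\DD\setminus x_n):x_{n-1}x_n^{w_n})\xrightarrow{\,\cdot\, x_{n-1}x_n^{w_n}} R/I(\DD\setminus x_n)\to R/I(\DD)\to 0$ is then exact, and the degree shift of $w_n+1$ on the left term accounts for multiplication by the degree-$(w_n+1)$ form $x_{n-1}x_n^{w_n}$. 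I would also simplify the colon ideal: because $x_n$ does not appear in any generator of $I(\DD\setminus x_n)$ and $\gcd$ considerations give $I(\DD\setminus x_n):(x_{n-1}x_n^{w_n}) = I(\DD\setminus x_n):x_{n-1}$, which matches the ideal in the stated Betti formula (note the paper writes $w$ for $w_n$ in that last display, presumably a typo to be silently read as $w_n$).

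Next I would set up the mapping cone. Let $\mathbf{F}_\bullet$ be the minimal free resolution of $R/I(\DD\setminus x_n)$ and $\mathbf{G}_\bullet$ that of $R/(I(\DD\setminus x_n):x_{n-1})$, shifted by $-w_n-1$. The multiplication map $x_{n-1}x_n^{w_n}$ lifts to a chain map $\varphi\colon \mathbf{G}_\bullet\to\mathbf{F}_\bullet$, and the mapping cone of $\varphi$ resolves $R/I(\DD)$. The key point — and this is the main obstacle — is proving this mapping cone is *minimal*, i.e.\ that the comparison map $\varphi$ has all entries in the maximal ideal $\mathfrak{m}=(x_1,\dots,x_n)$, so that no cancellation occurs. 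Here I would argue at the level of multidegrees: every minimal generator of the shifted module $(I(\DD\setminus x_n):x_{n-1})(-w_n-1)$, after multiplication by $x_{n-1}x_n^{w_n}$, lands in a multidegree that is a *strict* multiple (in the componentwise partial order, and in particular strictly larger in the $x_n$-coordinate, since $w_n\geq 1$) of the corresponding generator of $I(\DD\setminus x_n)$; more precisely, any generator of $I(\DD\setminus x_n)$ in the image involves $x_n$ to a positive power on the $\mathbf{G}$ side but to power zero on the $\mathbf{F}$ side, forcing the connecting matrix entries into $\mathfrak{m}$. One can make this rigorous either by the multidegree/upper-Koszul bookkeeping already developed in \Cref{thm:bettiKoszul} and \Cref{rem:GradedBetti}, or by directly comparing the (multigraded) shifts appearing in $\mathbf{F}_\bullet$ and $\mathbf{G}_\bullet(-w_n-1)$ and observing they never coincide.

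Once minimality is established, the Betti numbers of $R/I(\DD)$ are exactly the direct sum of the Betti numbers contributed by $\mathbf{F}_\bullet$ in each homological degree and those contributed by $\mathbf{G}_\bullet$ shifted up one homological degree and by $w_n+1$ in internal degree. This gives, for all $i,j$,
$$\beta_{i,j}(R/I(\DD)) = \beta_{i,j}(R/I(\DD\setminus x_n)) + \beta_{i-1,\,j-w_n-1}(R/(I(\DD\setminus x_n):x_{n-1})),$$
which is the claimed formula. I would close by noting that the case $i=1$ (or low $j$) is consistent since $R/I(\DD\setminus x_n):x_{n-1}$ has its $0$th free module in degree $0$, contributing the single new generator $x_{n-1}x_n^{w_n}$ of $I(\DD)$ in homological degree $1$ and internal degree $w_n+1$. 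The one genuinely substantive step, where I'd spend the most care, is the minimality argument — everything else is the standard colon-ideal short exact sequence plus the general mapping cone machinery from \cite{Peeva2011}.
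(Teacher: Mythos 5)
Your proposal is correct and follows the same overall path as the paper (which is essentially forced by the statement): set up the colon short exact sequence, simplify $I(\DD\setminus x_n):(x_{n-1}x_n^{w_n})=I(\DD\setminus x_n):x_{n-1}$, apply the mapping cone, prove minimality, and read off the Betti numbers. The one place where you genuinely diverge is the minimality step. The paper factors the connecting map $\cdot\,x_{n-1}x_n^{w_n}$ through $R/(I(\DD\setminus x_n):x_{n-1})$ as multiplication by $x_n^{w_n}$ followed by multiplication by $x_{n-1}$, and deduces from this factorization that the lifted comparison maps $\delta_i$ between the two minimal resolutions have no unit entries, so the cone differential stays inside $\mathfrak{m}$. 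You instead argue with the $\mathbb{N}^n$-grading: since $x_n$ divides no minimal generator of $I(\DD\setminus x_n)$ nor of the colon ideal, every multigraded shift in the resolution of $R/I(\DD\setminus x_n)$ has $x_n$-coordinate $0$, whereas after twisting by the multidegree of $x_{n-1}x_n^{w_n}$ every shift of the other resolution has $x_n$-coordinate $w_n\geq 1$; as the shifts never coincide, any multigraded lift of the connecting map has all entries in $\mathfrak{m}$. Both arguments are valid and of comparable length: your bookkeeping makes transparent that the leaf-and-sink hypothesis enters exactly through the disappearance of $x_n$ from $I(\DD\setminus x_n)$, and needs only the standard fact that the comparison map can be chosen multigraded, while the paper's factorization trick does not use the fine grading at all and would apply whenever the connecting homomorphism factors as a composite of multiplications by non-units. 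One small polish: the statement you actually need is not that the target multidegrees are ``strict multiples'' of the source ones, but simply that the twisted and untwisted shifts are never equal (your second formulation), since entries between non-comparable multidegrees vanish automatically.
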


\begin{proof}
Let $\DD'$ denote the  weighted oriented induced subgraph $\DD \setminus x_n$ of $\DD$ and let $x_{n-1}$ be the unique neighbor of $x_n$ such that $(x_{n-1},x_n)\in E(\DD)$. Since $x_n^{w_n}$ does not divide a minimal generator of $I(\DD')$, one has
$$I(\DD'):(x_{n-1}x_n^{w_n}) = I(\DD'):x_{n-1}.$$
Then, it implies that the exact sequence

\begin{equation}\label{eq:1}
    0 \longrightarrow \frac{R}{I(\DD'): (x_{n-1}x_n^{w_n}))} (-w_n-1) \xrightarrow[\delta]{x_{n-1}x_n^{w_n}} \frac{R}{I(\DD')} \longrightarrow \frac{R}{I(\DD)} \longrightarrow 0
\end{equation}
factors as

\begin{equation}\label{eq:2}
   \begin{tikzcd}
    0\arrow[r] & \frac{R}{I(\DD'): (x_{n-1}x_n^{w_n})} (-w_n-1)\arrow[r, "x_{n-1}x_n^{w_n}"] \arrow[d,  "x_n^{w_n}"]&  \frac{R}{I(\DD')}  \arrow[r]& \frac{R}{I(\DD)}\arrow[r]&  0. \\
 & R/ I(\DD') :x_{n-1} \arrow[ru,  "x_{n-1}"] &      &  & 
\end{tikzcd}  
\end{equation}

Let 
\begin{align*}
 \mathcal{F}&: ~~0 \cdots \xrightarrow{\phi_2} F_1 \xrightarrow{\phi_1} F_0=R \xrightarrow{\phi_0}  R/I(\DD'): (x_{n-1}x_n^{w_n}) \longrightarrow 0 \text{ and }\\
  \mathcal{G}&: ~~0 \cdots \xrightarrow{\psi_2} G_1 \xrightarrow{\psi_1} G_0=R \xrightarrow{\psi_0} R/I(\DD') \longrightarrow 0 
\end{align*}
be minimal free resolutions of $R/I(\DD'): (x_{n-1}x_n^{w_n})$ and $R/I(\DD'),$ respectively.  Then the mapping construction applied to \Cref{eq:1} provides a free resolution of $R/I(\DD)$ given by
$$0 \cdots \xrightarrow{\varphi_3} G_2\oplus F_1(-w_n-1) \xrightarrow{\varphi_2} G_1\oplus R(-w_n-1) \xrightarrow{\varphi_1} R \xrightarrow{\varphi_0} R/I(\DD) \longrightarrow 0  $$
where the map $\varphi_i$'s are defined by $\varphi_1 = \begin{array}{cc}
[\psi_1 & -\delta_0] 
\end{array}$ and
$$\varphi_i = 
\left[ \begin{array}{cc} \psi_i & (-1)^i\delta_{i-1}\\ 0 & \phi_{i-1}\end{array}\right]$$
for $i>1$ such that each $\delta_i: F_i(-w_n-1) \longrightarrow G_i$ is induced from the homomorphism $\delta$.

It follows from the factorization in \Cref{eq:2} that the entries of the matrix of $\delta_i$ are not units. Since $\mathcal{F}$ and $\mathcal{G}$ are minimal free resolutions, then none of the entries in the  matrix representation of $\varphi_i$ can be units. Thus the mapping cone construction applied to \Cref{eq:1} results with a minimal free resolution of $R/I(\DD)$. In particular, this implies the following recursive formula for the Betti numbers of $R/I(\DD)$
$$\beta_{i,j} (R/I(\DD)) = \beta_{i,j} (R/I(\DD')) +\beta_{i-1,j-w_n-1} (R/I(\DD'):x_{n-1})$$
for any $i, j$.
\end{proof}

\begin{corollary}\label{cor:mappingCone}
Let $\DD$ be a weighted oriented graph on the vertices $x_1,\ldots, x_n$ such that $x_n$ is a leaf  and a sink vertex.  Then
\begin{enumerate}
    \item[(a)] $\reg (R/I(\DD)) = \max \{ \reg (R/I(\DD \setminus x_n)), \reg (R/I(\DD \setminus x_n):x_{n-1})+1 \}$ and\\
        \item[(b)]  $\pdim (R/I(\DD)) = \max \{ \pdim(R/I(\DD \setminus x_n)), \pdim (R/I(\DD \setminus x_n):x_{n-1})+1 \}.$
\end{enumerate}
\end{corollary}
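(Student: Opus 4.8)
The plan is to read off both statements directly from the recursive Betti number formula established in Theorem~\ref{thm:mappingCone}, namely
$$\beta_{i,j} (R/I(\DD)) = \beta_{i,j} (R/I(\DD \setminus x_n)) + \beta_{i-1,j-w_n-1} (R/I(\DD \setminus x_n):x_{n-1})$$
valid for all $i,j$. Since the resolution produced by the mapping cone is \emph{minimal}, no cancellation occurs: a graded Betti number $\beta_{i,j}(R/I(\DD))$ is nonzero precisely when at least one of the two summands on the right is nonzero. This means that the set of pairs $(i,j)$ with $\beta_{i,j}(R/I(\DD)) \neq 0$ is the union of the set of such pairs for $R/I(\DD \setminus x_n)$ and the shifted set $\{(i+1, j+w_n+1) : \beta_{i,j}(R/I(\DD \setminus x_n):x_{n-1}) \neq 0\}$.

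For part~(b), $\pdim(R/I(\DD)) = \max\{i : \beta_{i,j}(R/I(\DD)) \neq 0 \text{ for some } j\}$. By the union description above, this maximum equals the larger of $\max\{i : \beta_{i,j}(R/I(\DD \setminus x_n)) \neq 0\} = \pdim(R/I(\DD \setminus x_n))$ and $\max\{i+1 : \beta_{i,j}(R/I(\DD \setminus x_n):x_{n-1}) \neq 0\} = \pdim(R/I(\DD \setminus x_n):x_{n-1}) + 1$, which is exactly the claimed formula. For part~(a), $\reg(R/I(\DD)) = \max\{j-i : \beta_{i,j}(R/I(\DD)) \neq 0\}$. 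The contribution from the first piece of the union is $\max\{j-i : \beta_{i,j}(R/I(\DD \setminus x_n)) \neq 0\} = \reg(R/I(\DD \setminus x_n))$; the contribution from the shifted piece is $\max\{(j+w_n+1)-(i+1) : \beta_{i,j}(R/I(\DD \setminus x_n):x_{n-1}) \neq 0\} = \reg(R/I(\DD \setminus x_n):x_{n-1}) + w_n$. Wait — I should double-check against the stated formula, which has a $+1$ rather than $+w_n$. The display in Theorem~\ref{thm:mappingCone} shows the shift is by $w_n+1$ in the $j$-index and by $1$ in the $i$-index, so the regularity shift is $(w_n+1) - 1 = w_n$; thus the corollary as literally written appears to record only the leaf-edge contribution of $+1$, which is correct in the case $w_n = 1$ (the leaf has trivial weight). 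In general one writes $\reg(R/I(\DD)) = \max\{\reg(R/I(\DD \setminus x_n)),\ \reg(R/I(\DD \setminus x_n):x_{n-1}) + w_n\}$, and the stated version is the specialization when $x_n$ is a trivial leaf; I would phrase the proof to match whichever convention the surrounding text fixes.

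\emph{The main obstacle} is essentially bookkeeping rather than any real difficulty: one must be careful that the mapping cone resolution is genuinely minimal (which Theorem~\ref{thm:mappingCone} already guarantees, since the factorization in~\eqref{eq:2} forces every entry of every $\delta_i$ to be a non-unit), so that nonvanishing of $\beta_{i,j}(R/I(\DD))$ can be tested summand-by-summand with no interference. Given that, both formulas are immediate from the definitions of $\pdim$ and $\reg$ as maxima over the support of the graded Betti table, using that the maximum of a function over a union of two sets is the maximum of the two separate maxima. No further input is needed beyond Theorem~\ref{thm:mappingCone}.
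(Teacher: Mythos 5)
Your derivation is correct and is exactly the intended route: the corollary is meant to be read off directly from the recursion in \Cref{thm:mappingCone}, and since graded Betti numbers are nonnegative the formula $\beta_{i,j}(R/I(\DD)) = \beta_{i,j}(R/I(\DD\setminus x_n)) + \beta_{i-1,j-w_n-1}(R/I(\DD\setminus x_n):x_{n-1})$ shows the Betti support of $R/I(\DD)$ is the union of the support of $R/I(\DD\setminus x_n)$ and the shifted support of $R/I(\DD\setminus x_n):x_{n-1}$, giving both maxima summand-by-summand as you argue. You are also right about the discrepancy in part (a): the shift $(i,j)\mapsto(i+1,j+w_n+1)$ changes $j-i$ by $w_n$, so the correct statement is $\reg(R/I(\DD)) = \max\{\reg(R/I(\DD\setminus x_n)),\ \reg(R/I(\DD\setminus x_n):x_{n-1})+w_n\}$; the ``$+1$'' in the printed corollary is accurate only when $w_n=1$, and the paper itself uses the $+w_n$ version when invoking this corollary in \Cref{cor:recursiveBettiPath}.
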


\subsection{Application} Let $\mathcal{P}_n$ denote a  weighted naturally oriented path on $n$ vertices. If all non-source vertices have non-trivial weights, regularity and projective dimension formulas follow from \Cref{cor:weightedForest}. If one allows non-source vertices to have trivial weights, computing the regularity and the projective dimension becomes a much more complicated task as these invariants heavily rely on the orientation of the graph and the positions of non-trivial weights. Providing formulas for the regularity and projective dimension of any weighted oriented graph is an open problem.

In an attempt to address this general problem, weighted naturally oriented paths and cycles are studied in \cite{BBKO}. Indeed, positions of non-trivial weights is quite crucial in computing the regularity (see \cite[Theorem 5.9]{BBKO}). Particularly, whenever there are consecutive non-trivial weight vertices  $x_i$ and $x_{i+2}$ such that $x_{i+1}$ has a trivial weight, then $x_i$ and $x_{i+2}$ can not ``contribute" to the regularity simultaneously. One needs to consider the contribution of the one or the other and determine the regularity by taking the maximums of corresponding contributions (see \cite[Notation 5.4, Definition 5.6 and Theorem 5.9]{BBKO}).

In what follows, we consider a more general case than that of \cite{BBKO} and provide a recursive formula for the Betti numbers of egde ideal of a weighted oriented path. Furthermore, our recursive formulas can offer an explanation about the ``distance two away condition" of \cite[Theorem 5.9]{BBKO}.

\begin{corollary}\label{cor:recursiveBettiPath}
Let $\mathcal{P}_n$ be a  weighted oriented path on the vertices $x_1,\ldots, x_n$ such that $(x_{n-2},x_{n-1}),$ $ (x_{n-1},x_n) \in E(\mathcal{P}_n)$.
\begin{enumerate}
    \item[(a)]  If $x_{n-1}$ is a non-trivial vertex, then
    $$\reg (R/I(\mathcal{P}_n)) = \reg (R/I(\mathcal{P}_{n-1})) +w_n-1.$$
    \item[(b)] If $x_{n-1} $ is a trivial vertex, then 
    $$\reg (R/I(\mathcal{P}_n)) = \max \{ \reg (R/I(\mathcal{P}_{n-1})), \reg (R/I(\mathcal{P}_{n-3}))+w_n \}$$
\end{enumerate}

\end{corollary}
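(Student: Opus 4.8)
The plan is to apply \Cref{cor:mappingCone} with the leaf-and-sink vertex $x_n$, which immediately yields
$$\reg (R/I(\mathcal{P}_n)) = \max \{ \reg (R/I(\mathcal{P}_{n-1})),\ \reg (R/I(\mathcal{P}_n \setminus x_n):x_{n-1})+1 \},$$
since $\mathcal{P}_n \setminus x_n = \mathcal{P}_{n-1}$. The entire argument therefore reduces to understanding the colon ideal $I(\mathcal{P}_{n-1}):x_{n-1}$ and identifying its regularity in the two cases. First I would write out $I(\mathcal{P}_{n-1})$ explicitly as the edge ideal of the path on $x_1,\ldots,x_{n-1}$, separating the generators that involve $x_{n-1}$ from those that do not; only the edge between $x_{n-2}$ and $x_{n-1}$ contributes a generator involving $x_{n-1}$, and which variable carries the weight depends on the orientation of that edge, hence on whether $x_{n-1}$ is trivial.

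In case (a), $x_{n-1}$ is non-trivial, so the edge $(x_{n-2},x_{n-1}) \in E(\mathcal{P}_n)$ forces the generator $x_{n-2}x_{n-1}^{w_{n-1}}$ with $w_{n-1}>1$. Colonizing by $x_{n-1}$ leaves this generator still divisible by $x_{n-1}$ (since $w_{n-1}-1\geq 1$), so $x_{n-2}$ is \emph{not} forced into the colon ideal; in fact $I(\mathcal{P}_{n-1}):x_{n-1}=I(\mathcal{P}_{n-1})$ because no minimal generator is divisible by $x_{n-1}$ to the first power alone. Wait — one must be careful: the generator $x_{n-2}x_{n-1}^{w_{n-1}}$ colonized by $x_{n-1}$ gives $x_{n-2}x_{n-1}^{w_{n-1}-1}$, which is a proper divisor of the original only if we are genuinely lowering the exponent, and it is still a monomial in the ideal-generated-by-these, so the colon ideal is $I(\mathcal{P}_{n-2}) + (x_{n-2}x_{n-1}^{w_{n-1}-1})$, i.e.\ it is the edge ideal of the weight-reduced path $\mathcal{P}_{n-1}'$ where $x_{n-1}$'s weight drops by one. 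Hence $\reg(R/I(\mathcal{P}_{n-1}):x_{n-1}) = \reg(R/I(\mathcal{P}_{n-1}')) = \reg(R/I(\mathcal{P}_{n-1}))-(w_{n-1}-1)$ by iterating the sink-reduction \Cref{cor:sinkReduction} (applied to the sink $x_{n-1}$ of $\mathcal{P}_{n-1}$). Actually, since $w_n=1$ would trivialize the statement, and more to the point one checks the arithmetic works out: plugging into the max, the second term is $\reg(R/I(\mathcal{P}_{n-1}))-(w_{n-1}-1)+1$, and a short argument (using that $w_{n-1}\geq 2$, so this is $\leq \reg(R/I(\mathcal{P}_{n-1}))$) shows the first term dominates when $w_n$ enters correctly — so one actually needs the $-w_n-1$ shift from \Cref{thm:mappingCone} directly rather than the simplified \Cref{cor:mappingCone}; I would track the homological degrees through \Cref{thm:mappingCone} to land on $\reg(R/I(\mathcal{P}_{n-1}))+w_n-1$.

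In case (b), $x_{n-1}$ is trivial, so the generator involving $x_{n-1}$ coming from the edge $(x_{n-2},x_{n-1})$ is $x_{n-2}x_{n-1}$ (weight one on the target $x_{n-1}$). Colonizing by $x_{n-1}$ now kills the $x_{n-1}$ entirely and throws the bare variable $x_{n-2}$ into the colon ideal: $I(\mathcal{P}_{n-1}):x_{n-1} = I(\mathcal{P}_{n-1} \setminus \{x_{n-1}, \text{edges at } x_{n-2}\}) + (x_{n-2})$. Since $(x_{n-2})$ absorbs every generator divisible by $x_{n-2}$, what survives is $(x_{n-2}) + I(\mathcal{P}_{n-3})$ where $\mathcal{P}_{n-3}$ is the path on $x_1,\ldots,x_{n-3}$ with its inherited weights and orientation. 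As $x_{n-2}$ is a new variable not appearing in $I(\mathcal{P}_{n-3})$, we get $R/(I(\mathcal{P}_{n-1}):x_{n-1}) \cong (R'/I(\mathcal{P}_{n-3})) \otimes_k k[x_{n-2}]/(x_{n-2})$ up to the harmless extra variables, so $\reg(R/(I(\mathcal{P}_{n-1}):x_{n-1})) = \reg(R/I(\mathcal{P}_{n-3}))$. Feeding this into the mapping-cone regularity formula (with the correct $w_n+1$ shift from \Cref{thm:mappingCone}) gives $\reg(R/I(\mathcal{P}_n)) = \max\{\reg(R/I(\mathcal{P}_{n-1})),\ \reg(R/I(\mathcal{P}_{n-3}))+w_n\}$, as claimed.

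\textbf{Main obstacle.} The delicate point is bookkeeping the homological shifts: \Cref{cor:mappingCone} as stated gives a "$+1$" in degree difference, but the actual regularity increment must absorb the $-w_n-1$ twist in \Cref{thm:mappingCone} together with the drop in regularity caused by passing to the colon ideal (which, in case (a), is exactly the weight-reduction of \Cref{cor:sinkReduction}). I would therefore work directly from the Betti-number recursion $\beta_{i,j}(R/I(\mathcal{P}_n)) = \beta_{i,j}(R/I(\mathcal{P}_{n-1})) + \beta_{i-1,j-w_n-1}(R/I(\mathcal{P}_{n-1}):x_{n-1})$ rather than the packaged corollary, compute $\max\{j-i\}$ over each summand separately, and verify that in case (a) the two contributions coincide after the weight-reduction identity collapses the colon term, while in case (b) they genuinely differ and produce the maximum. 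A secondary check is confirming that the colon ideal in case (b) is correctly identified with $(x_{n-2}) + I(\mathcal{P}_{n-3})$, which requires noting that the orientation of the edge at $x_{n-3}$–$x_{n-2}$ (if $x_{n-2}$ has non-trivial weight) still leaves $\mathcal{P}_{n-3}$ a legitimate weighted oriented path and that no generator straddling $x_{n-2}$ and $x_{n-3}$ escapes the ideal $(x_{n-2})$.
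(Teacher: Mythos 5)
Your overall route is the same as the paper's: delete the leaf--sink $x_n$, apply the mapping cone recursion of \Cref{thm:mappingCone}, and identify the colon ideal $I(\mathcal{P}_{n-1}):x_{n-1}$ -- as the edge ideal of the weight reduced path $\mathcal{P}'_{n-1}$ in case (a), and as $I(\mathcal{P}_{n-3})+(x_{n-2})$ in case (b). Your case (b) is correct and is essentially the paper's argument, and your observation that the regularity increment coming from the twist $(-w_n-1)$ in \Cref{thm:mappingCone} is $+w_n$ (not the $+1$ literally printed in \Cref{cor:mappingCone}) is exactly the shift the paper itself uses in this proof.

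The genuine gap is in case (a). You claim $\reg\bigl(R/(I(\mathcal{P}_{n-1}):x_{n-1})\bigr)=\reg(R/I(\mathcal{P}'_{n-1}))=\reg(R/I(\mathcal{P}_{n-1}))-(w_{n-1}-1)$ ``by iterating'' \Cref{cor:sinkReduction}. That is the formula for reducing the weight of $x_{n-1}$ all the way down to $1$; but the colon by $x_{n-1}$ lowers the exponent by exactly one, $x_{n-2}x_{n-1}^{w_{n-1}}:x_{n-1}=x_{n-2}x_{n-1}^{w_{n-1}-1}$, so $\mathcal{P}'_{n-1}$ is the weight reduced form of $\mathcal{P}_{n-1}$ on the non-trivial sink $x_{n-1}$ in the sense of the paper (weight dropped by one), and a single application of \Cref{cor:sinkReduction} gives $\reg(R/I(\mathcal{P}'_{n-1}))=\reg(R/I(\mathcal{P}_{n-1}))-1$. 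With your value the mapping cone maximum becomes $\max\{\reg(R/I(\mathcal{P}_{n-1})),\ \reg(R/I(\mathcal{P}_{n-1}))-w_{n-1}+1+w_n\}$, which is not $\reg(R/I(\mathcal{P}_{n-1}))+w_n-1$ in general (take $w_{n-1}=5$, $w_n=2$: your expression gives $\reg(R/I(\mathcal{P}_{n-1}))$, the statement requires $\reg(R/I(\mathcal{P}_{n-1}))+1$); your text then only asserts that tracking degrees ``would land'' on the claimed formula without carrying out the computation, so as written the conclusion of (a) does not follow. The fix is exactly the paper's one line: with the corrected colon regularity the maximum is $\max\{\reg(R/I(\mathcal{P}_{n-1})),\ \reg(R/I(\mathcal{P}_{n-1}))-1+w_n\}=\reg(R/I(\mathcal{P}_{n-1}))+w_n-1$, since $w_n\geq 1$. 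The intermediate aside in your write-up concluding that ``the first term dominates'' should also be discarded, as it would contradict the statement of (a) whenever $w_n\geq 2$.
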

\begin{proof}
(a) Suppose $w_{n-1} >1.$ Then $I(\mathcal{P}_{n-1}):x_{n-1} =I(\mathcal{P}'_{n-1}) $  where  $\mathcal{P}'_{n-1}$ is a weighted reduced form of $\mathcal{P}_{n-1}$ on  $x_{n-1}$. Thus, it follows from \Cref{cor:mappingCone}  that
\begin{equation}\label{eq:pathGen}
  \reg (R/I(\mathcal{P}_n)) = \max \{ \reg (R/I(\mathcal{P}_{n-1})), \reg (R/I(\mathcal{P}'_{n-1})) +w_n\}.  
\end{equation}

Since $x_{n-1}$ is a sink vertex with a non-trivial weight in $\mathcal{P}'_{n-1}$, we have $\reg (R/I(\mathcal{P}_{n-1}))= \reg (R/I(\mathcal{P}'_{n-1}))+1.$ 

By  making use of \Cref{cor:sinkReduction} and the fact that $w_n \geq 1$, \Cref{eq:pathGen} yields to the following
$$\reg (R/I(\mathcal{P}_n))  = \reg (R/I(\mathcal{P}'_{n-1})) +w_n = (\reg (R/I(\mathcal{P}_{n-1})) -1 ) +w_n.$$

(b) Suppose $w_{n-1}=1.$ Let $I':= I(\mathcal{P}_{n-1}):x_{n-1} = I(\mathcal{P}_{n-3})+(x_{n-2})$. Since $x_{n-2}$ does not divide any minimal generator of $I(\mathcal{P}_{n-3}),$ one can obtain the minimal free resolution of $R/ I'$ by taking the tensor product of minimal free resolutions of  $R/ I(\mathcal{P}_{n-3})$ and $R/(x_{n-2})$. Then
$$\reg ( R/I' ) = \reg  ( R/ I(\mathcal{P}_{n-3}) ),$$
and the statement follows from \Cref{cor:mappingCone}
\end{proof}

\begin{remark}
Let $\mathcal{P}_n$ be a  weighted naturally oriented path on the vertices $x_1,\ldots, x_n$. If $w_n >1$ and $w_{n-1}=1$, we can use \Cref{cor:recursiveBettiPath}  part (b) to determine the regularity of $R/I(\mathcal{P}_n)$ inductively by taking the maximum of the following two expressions.
 $$\reg (R/I(\mathcal{P}_n)) = \max \{ \reg (R/I(\mathcal{P}_{n-1})), \reg (R/I(\mathcal{P}_{n-3}))+w_n \}$$
Note that the ideal in the first expression contains $x_{n-2}$ in its support. However, vertex  $x_n$ is not in the support of the first ideal and its weight does not contribute to the regularity in the first expression. On the other hand, the second expression contains $w_n$, the weight contribution of $x_n$, and the ideal associated to it does not contain $x_{n-2}$ in its support. Thus, \Cref{cor:recursiveBettiPath}  part (b) exhibits the behavior of distance two away non-trivial weights in the regularity computations.
\end{remark}

\section{Questions}\label{ch6}

\begin{question}
Let $\DD$ be a weighted oriented graph and $G$ be its underlying graph on $n$ vertices.
\begin{enumerate}
 \item[(a)]  Is there any relation between the Betti numbers of $R/I(G)$ and $R/I(\DD)$?
    \item[(b)]   Is  $\pdim (R/I(G)) \leq \pdim (R/I(\DD))$?
       \item[(b)]  Is $\reg (R/I(G)) \leq \reg (R/I(\DD))$?
\end{enumerate}
\end{question}

Intuition and computational evidence suggests that both questions have positive answers.

Our next question is motivated by \Cref{cor:Betti} and \Cref{cor:sinkReduction}. In these two corollaries we provide a positive answer to the following questions when $x_i$ is a non-trivial sink vertex. It is natural to wonder whether it is true for any non-trivial vertex $x_i$.

\begin{question}\label{q:2}
Let $\DD$ be a weighted oriented graph with a non-trivial weight vertex $x_i$  and let $\DD'$ be a weight reduced form of $\DD$ on $x_i.$ 
\begin{enumerate}
\item[(a)] When is $\beta_i (R/I(\DD)) = \beta_i (R/I(\DD')) $  for all $i \geq 0$?
\item[(b)] Is there any relation between $\beta_{i,j} (R/I(\DD))$ and $ \beta_{i,j} (R/I(\DD')) $?
\item[(c)] When is $\pdim (R/I(\DD)) = \pdim (R/I(\DD'))$?
\item[(d)] When is $\reg (R/I(\DD)) = \reg (R/I(\DD')) +1$?
\end{enumerate}
\end{question}

\begin{example}

Let $I(\DD)= (x_2x_1,x_3x_2^3,x_4x_3^2,x_4x_5)$. Consider the following chain of weight reductions where $\DD'$ is a weight reduced form of $\DD$ on $x_2$,  $\DD''$  is a weight reduced form of $\DD'$ on $x_3$, and $\DD'''$ is a weight reduced form of $\DD''$ on $x_2$ with the corresponding edge ideals given as
\begin{align*}
   I(\DD') &=  (x_2x_1,x_3x_2^2,x_4x_3^2,x_4x_5)\\
      I(\DD'') &= (x_2x_1,x_3x_2^2,x_4x_3,x_4x_5) \\
      I(\DD''') & =(x_2x_1,x_3x_2,x_4x_3,x_4x_5).
\end{align*}

Below, we present the Betti tables of $I(\DD), I(\DD'), I(\DD''),$ and $I(\DD''')$, in order. 
	\medskip 
	{
\begin{verbatim}
                 0   1   2   3   4                0   1   2   3   4         
            -----------------------          --------------------------  
            0:   1   -   -   -   -           0:   1   -   -   -   -      
            1:   -   2   -   -   -           1:   -   2   -   -   -     
            2:   -   1   2   -   -           2:   -   2   -   -   -   
            3:   -   1   2   1   -           3:   -   -   3   4   1 
            4:   -   -   2   3   1             
            -----------------------          --------------------------
            Tot: 1   4   6   4   1           Tot:  1   4   6   4   1
		\end{verbatim}
	}
\medskip 
	\medskip 
	{
\begin{verbatim}
                 0   1   2   3                     0   1   2   3  
            -----------------------          ----------------------
            0:   1   -   -   -               0:    1   -   -   -   
            1:   -   3   1   -               1:    -   4   3   -          
            2:   -   1   4   2               2:    -   -   1   1 
            -----------------------          ---------------------- 
            Tot: 1   4   5   2               Tot:  1   4   4   1 
   
		\end{verbatim}
	}
\medskip 	

Based on the above Betti tables, equalities in \Cref{q:2} (a),(c),(d)
 hold for $\DD$ and $\DD'$. However, we have 
 $$\pdim (R/I(\DD')) = \pdim (R/I(\DD''))+1,$$
$$\reg (R/I(\DD'')) = \reg(R/I(\DD''')),$$
indicating that suggested equalities in \Cref{q:2} are not always valid.  Computational experiments suggest that the desired equalities hold for $\DD$ and a reduced form of $\DD$ on $x_i$ where $w_i >2$.
\end{example}

Answering above questions can help towards improving our understanding on the behavior of Betti numbers  of monomial ideals under certain monomial operations.

\bibliographystyle{plain}
\bibliography{references}

\end{document}